\documentclass[11pt]{article}

\usepackage[T1]{fontenc}
\usepackage[utf8]{inputenc}
\usepackage{lmodern}
\IfFileExists{microtype.sty}{\usepackage{microtype}}{}
\usepackage[a4paper,margin=1.05in]{geometry}
\usepackage{amsmath,amssymb,amsthm}
\usepackage{booktabs}
\usepackage{xcolor}
\usepackage{url}
\usepackage[numbers,sort&compress]{natbib}
\usepackage[colorlinks=true,linkcolor=blue!55!black,citecolor=blue!55!black,
            urlcolor=blue!55!black]{hyperref}
\hypersetup{
  pdftitle={Exact Periodicity, Surjectivity, and a Haar Limit Law
            for a Restarting Josephus Process},
  pdfauthor={Lizhong Chen},
  pdfsubject={Arithmetic and probabilistic structure of a restarting
              Josephus process}
}
\newif\ifpaperfallbackcref
\IfFileExists{cleveref.sty}{%
  \usepackage[nameinlink,noabbrev]{cleveref}%
}{%
  \paperfallbackcreftrue
}
\ifpaperfallbackcref
\makeatletter
\newcommand{\cref}[1]{%
  \expandafter\paper@cref@split#1:\@nil{#1}}
\def\paper@cref@split#1:#2\@nil#3{%
  \@ifundefined{paper@cref@#1}
    {\ref{#3}}
    {\csname paper@cref@#1\endcsname{#3}}}
\newcommand{\Cref}[1]{%
  \expandafter\paper@Cref@split#1:\@nil{#1}}
\def\paper@Cref@split#1:#2\@nil#3{%
  \@ifundefined{paper@Cref@#1}
    {\ref{#3}}
    {\csname paper@Cref@#1\endcsname{#3}}}
\def\paper@cref@eq#1{equation~\eqref{#1}}
\def\paper@cref@lem#1{lemma~\ref{#1}}
\def\paper@cref@prop#1{proposition~\ref{#1}}
\def\paper@cref@cor#1{corollary~\ref{#1}}
\def\paper@cref@thm#1{theorem~\ref{#1}}
\def\paper@cref@prob#1{open problem~\ref{#1}}
\def\paper@cref@sec#1{section~\ref{#1}}
\def\paper@cref@app#1{appendix~\ref{#1}}
\def\paper@Cref@eq#1{Equation~\eqref{#1}}
\def\paper@Cref@lem#1{Lemma~\ref{#1}}
\def\paper@Cref@prop#1{Proposition~\ref{#1}}
\def\paper@Cref@cor#1{Corollary~\ref{#1}}
\def\paper@Cref@thm#1{Theorem~\ref{#1}}
\def\paper@Cref@prob#1{Open Problem~\ref{#1}}
\def\paper@Cref@sec#1{Section~\ref{#1}}
\def\paper@Cref@app#1{Appendix~\ref{#1}}
\makeatother
\fi

\newtheorem{theorem}{Theorem}[section]
\newtheorem{maintheorem}{Theorem}

\newtheorem{proposition}[theorem]{Proposition}
\newtheorem{lemma}[theorem]{Lemma}
\newtheorem{corollary}[theorem]{Corollary}
\newtheorem{problem}[theorem]{Open Problem}
\theoremstyle{definition}
\newtheorem{definition}[theorem]{Definition}
\newtheorem{remark}[theorem]{Remark}

\ifpaperfallbackcref\else
  \crefname{maintheorem}{theorem}{theorems}
  \Crefname{maintheorem}{Theorem}{Theorems}
\fi

\DeclareMathOperator{\lcm}{lcm}
\DeclareMathOperator{\Per}{Per}
\DeclareMathOperator{\Law}{Law}
\DeclareMathOperator{\Cov}{Cov}

\DeclareMathOperator{\nextprime}{nextprime}
\newcommand{\ind}{\mathbf 1}

\title{\Large\bfseries
Exact Periodicity, Surjectivity, and a Haar Limit Law\\
for a Restarting Josephus Process}
\author{%
Lizhong Chen\thanks{Corresponding author. Email:
\texttt{lchendh@connect.ust.hk}}\\
\small Department of Mathematics\\[-1mm]
\small The Hong Kong University of Science and Technology\\[-1mm]
\small Clear Water Bay, Kowloon, Hong Kong}
\date{}

\begin{document}

\maketitle

\begin{abstract}
We study a \emph{restarting Josephus process} in which the participants retain
their linear order and counting restarts at the current leftmost survivor
after every deletion.  For step size \(m\), put \(q=m-1\), and let \(F_n(q)\)
denote the initial position of the survivor.  Reverse insertion gives
\[
 F_1(q)=1,\qquad
 F_k(q)=F_{k-1}(q)+
 \ind_{\{q\bmod k<F_{k-1}(q)\}}.
\]
Writing \(L_n=\lcm(1,\ldots,n)\), we establish three results for the compatible
residue system in this recurrence.  First, the full period group of \(F_n\) is
exactly \(L_n\mathbb Z\).  Second, \(F_n\) is surjective onto
\(\{1,\ldots,n\}\).  The proof is constructive and unconditional but
computer-assisted: a Chinese-remainder construction and explicit prime
estimates reduce it to a finite exact certificate.  Third, if
\(\widetilde Q_n\) is uniform modulo \(L_n\), then
\((F_n(\widetilde Q_n)-1)/(n-1)\) converges to a symmetric, nondegenerate law
on \([0,1]\).  A common Haar coupling yields almost-sure and \(L^r\)
convergence for every \(1\le r<\infty\), together with an \(O(n^{-1/4})\)
bound in \(W_1\).  Logarithmic boundary-mass estimates rule out every
symmetric beta law.  We also formulate endpoint dominance as an open problem,
prove strict dominance over the two nearest internal positions for every
\(n\ge4\), exclude prime levels as minimal counterexamples, and verify the
claim exactly through \(n=49\).
\end{abstract}

\medskip
\noindent\textbf{Keywords.}
Josephus problem; least common multiple; Chinese remainder theorem;
surjectivity; survivor distribution; profinite integers; Haar measure.

\medskip
\noindent\textbf{2020 Mathematics Subject Classification.}
Primary 05A05, 60F15; Secondary 11A07, 11N05, 60B10.

\section{Introduction}
\label{sec:introduction}

The Josephus problem is a classical counting-out problem.  It has led to
many questions about elimination orders, permutations, recurrences, and
algorithms.  Standard references include
\citet[Section~1.3]{GrahamKnuthPatashnik1994}, the generalized formulation of
\citet{Jakobczyk1973}, the algorithmic work of \citet{Lloyd1983}, the
functional-iteration viewpoint of \citet{OdlyzkoWilf1991}, and the explicit
formulas and bounds of \citet{HalbeisenHungerbuhler1997}.  Historical and
expository accounts may be found in \citet{Schumer2002,Groer2003}.
\citet{Theriault2000} developed algorithms for later elimination times in
the continuing-count process and for a different repeated-table variation.

In the classical circular convention, counting continues from the next
surviving participant after each deletion.  In this paper, we study a
different rule.  The participants retain their left-to-right order, and
counting restarts at the current leftmost survivor after \emph{every}
deletion.  Thus, if the step size is \(m\) and
\[
 q=m-1,
\]
then at population size \(k\ge2\) the deleted current rank is
\[
 1+(q\bmod k).
\]
We call this rule the restarting Josephus process.  It replaces the standard
modular-rotation recurrence by the threshold recurrence
\begin{equation}
 F_1(q)=1,\qquad
 F_k(q)=F_{k-1}(q)+
 \ind_{\{q\bmod k<F_{k-1}(q)\}}.
 \label{eq:intro-recurrence}
\end{equation}
Here \(F_n(q)\) is the initial position of the survivor.  The diagonal
case \(m=n\) of the same rule is recorded as OEIS A128982
\citep{OEISA128982}; thus the restart convention itself is not new.  We
study the arithmetic structure of the full two-parameter family \((n,m)\),
including its exact periods, fibers, and limiting behaviour.  To the best of
our knowledge, the exact period group,
surjectivity theorem, and limit law below have not previously been established
for this restarting model.  This process is different from the usual
continuing-count process.

For \(n\ge2\), we call
\[
 \mathbf c_n(q)
 =(q\bmod 2,q\bmod 3,\ldots,q\bmod n)
 \in\prod_{k=2}^n\mathbb Z/k\mathbb Z
\]
the \emph{residue vector of \(q\) through level \(n\)}.  Its components are
not freely prescribed.  We call a tuple \((c_2,\ldots,c_n)\) a
\emph{compatible residue vector} if
\[
 c_i\equiv c_j\pmod{\gcd(i,j)}
 \qquad(2\le i,j\le n).
\]
Every \(\mathbf c_n(q)\) is compatible.  A single class \(q\bmod L_n\),
where
\[
 L_n=\lcm(1,2,\ldots,n)
\]
determines the entire vector.  We use this compatibility condition
throughout the paper.  Translational symmetries determine the period group of
\(F_n\).  Suitable compatible residue vectors determine which survivor
labels can be reached, while a Haar-distributed point of
\(\widehat{\mathbb Z}\) gives a common coupling for the finite
distributions.  Thus the same arithmetic description is used in the proofs
of periodicity, surjectivity, and the limit law.

Chinese-remainder methods have also played an important role for classical
Josephus permutations.  In particular,
\citet{DowdyMays1989} gave a Chinese-remainder criterion for deciding whether
a prescribed elimination order arises from some step size, and
\citet{WilsonMorgan2010} used Fourier analysis on finite abelian groups for a
related enumeration.  Those works provide related arithmetic precedents, but
they concern the continuing-count process.

\subsection*{Main results}

Our three main results are as follows.

\paragraph{\Cref{thm:exact-period}: exact periodicity.}
We first determine the exact period group.  It is immediate that \(L_n\) is
a period, but this does not exclude a smaller one.  We prove that
\[
 \Per(F_n)=L_n\mathbb Z.
\]
The proof uses a trajectory that becomes permanently constant.  Together
with the least recurrence level not dividing a proposed shift, this gives an
explicit witness against every nonmultiple of \(L_n\).

\paragraph{\Cref{thm:surjectivity}: surjectivity.}
We next prove that the survivor map has image
\[
 \{F_n(q):q\in\mathbb Z\}=\{1,\ldots,n\}
\]
for every \(n\).  We use a Chinese-remainder construction with selected
residues at primes greater than \(n/2\) to produce intervals of survivor
labels.  A short-prime-interval condition joins these intervals.  The
explicit estimates of \citet{Dusart2010} handle the infinite tail, and an
exact-arithmetic program certifies the remaining finite range.  Hence the
theorem is unconditional but computer-assisted, and the finite certificate
is part of the proof.

\paragraph{\Cref{thm:limit}: a Haar limit law.}
Finally, we prove a limit theorem.
For \(n\ge2\), let \(\widetilde Q_n\) be uniform modulo \(L_n\), and define
\[
 \mu_n=\Law\!\left(\frac{F_n(\widetilde Q_n)-1}{n-1}\right).
\]
We place all finite models on a single probability space by taking a random
profinite integer \(Q\in\widehat{\mathbb Z}\) distributed according to
normalised Haar measure.  A generalized-CRT covariance identity, a
short-block uniformity estimate, and pathwise control of the random
threshold imply that
\[
 \frac{S_n-1}{n-1}\longrightarrow Y,
\]
where \(S_n=F_n(Q\bmod L_n)\),
almost surely and in every finite \(L^r\), with
\[
 \mathbb E\left|
 \frac{S_n-1}{n-1}-Y
 \right|=O(n^{-1/4}).
\]
The law \(\mu=\Law(Y)\) is symmetric and nondegenerate, and
\(W_1(\mu_n,\mu)=O(n^{-1/4})\).  Probabilistic Josephus variants with a
different random deletion mechanism were recently studied by
\citet{AdiceamEtAl2024}; the randomness here instead comes from the finite
input \(\widetilde Q_n\), chosen uniformly over a complete period.

The exact endpoint events have mass
\[
 \frac{\varphi(L_n)}{L_n}
 =\prod_{p\le n}\left(1-\frac1p\right)
 \sim\frac{e^{-\gamma}}{\log n}.
\]
Transferring this mass through the quantitative coupling yields logarithmic
lower bounds for the boundary mass of \(\mu\).  In particular, the limit is
not a symmetric beta distribution: the proof excludes every
\(\operatorname{Beta}(\alpha,\alpha)\), \(\alpha>0\).
Explicit identification of \(\mu\), non-atomicity, and the existence of a
density remain open.

\subsection*{An open structural problem}

We also consider a finer question about the fibers of the survivor map.  For
\[
 N_{n,j}=\#\{0\le q<L_n:F_n(q)=j\},
\]
reflection and the endpoint criteria give
\[
 N_{n,j}=N_{n,n+1-j},\qquad
 N_{n,1}=N_{n,n}=\varphi(L_n).
\]
We say that \emph{endpoint dominance} holds at level \(n\) if
\[
 N_{n,j}<\varphi(L_n)\qquad(1<j<n),
\]
so that the endpoint fibers are the unique global maxima.  We ask whether
this holds for every \(n\ge4\).  We prove the inequality for the
nearest internal positions \(j=2,n-1\), show that a prime level cannot be a
minimal counterexample, and derive exact identities for transitions between
adjacent survivor labels.
Exact computation verifies endpoint dominance, as well as a stronger
composite-level majorization, through \(n=49\).  We state these finite results
as evidence rather than as a proof for all \(n\).

\subsection*{Organization}

\Cref{sec:setup} develops the recurrence, reflection, endpoint criteria, and
residue-vector formulation.  \Cref{sec:period} proves
\Cref{thm:exact-period}.  \Cref{sec:range} proves
\Cref{thm:surjectivity} and records the precise role of the finite
certificate.  \Cref{sec:limit} proves \Cref{thm:limit} and its boundary
properties.  \Cref{sec:endpoint} formulates endpoint dominance and presents
the rigorous and computational partial results.  Reproduction details for
the computer-assisted components are collected in
\Cref{app:certificates}.

\section{The process and its arithmetic encoding}
\label{sec:setup}

Throughout, \(q\bmod k\) denotes the least nonnegative residue in
\(\{0,\ldots,k-1\}\), even when \(q<0\).  This convention extends the
survivor function from the original parameters \(q=m-1\ge0\) to every
integer \(q\).

\begin{definition}
The \emph{restarting Josephus process with parameter \(q\)} starts with the
ordered list \(1,\ldots,n\).  While the current list has
length \(k\ge2\), delete its element of current rank
\[
 1+(q\bmod k)
\]
from the left.  Preserve the relative order of the remaining entries and
restart counting at their leftmost member.  The initial position of the last
remaining participant is denoted by \(F_n(q)\).  We also set
\[
 L_n=\lcm(1,\ldots,n),\qquad L_1=1.
\]
\end{definition}

We use Euler's totient function with the convention \(\varphi(1)=1\).

\subsection{Reverse-insertion recurrence}

\begin{lemma}[Fundamental recurrence]
\label{lem:recurrence}
For every \(q\in\mathbb Z\),
\begin{equation}
 F_1(q)=1,\qquad
 F_k(q)=F_{k-1}(q)+
 \ind_{\{q\bmod k<F_{k-1}(q)\}}
 \quad(k\ge2).
 \label{eq:recurrence}
\end{equation}
In particular, \(1\le F_k(q)\le k\), and \(k\mapsto F_k(q)\) is
nondecreasing.
\end{lemma}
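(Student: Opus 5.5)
The plan is to prove the recurrence by a first-step decomposition of the process, in the spirit of reverse insertion. Fix \(q\) and \(k\ge2\). Starting from the list \(1,\ldots,k\), the first deletion removes the element of rank \(r=1+(q\bmod k)\). This leaves the ordered list \(1,\ldots,r-1,r+1,\ldots,k\) of length \(k-1\), and counting restarts at its leftmost member.

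The key observation is that the rule depends only on current ranks and on \(q\). So the rest of the process on this list is exactly the restarting process with parameter \(q\) on \(k-1\) participants, transported by the order-preserving bijection
\[
\sigma:\{1,\ldots,k-1\}\to\{1,\ldots,k\}\setminus\{r\},\qquad
\sigma(i)=i+\ind_{\{i\ge r\}} .
\]
Hence the survivor of the length-\(k\) process is \(\sigma(F_{k-1}(q))\). Concretely, \(F_k(q)=F_{k-1}(q)\) if \(F_{k-1}(q)<r\), and \(F_k(q)=F_{k-1}(q)+1\) otherwise.

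The condition \(F_{k-1}(q)\ge r\) is \(F_{k-1}(q)\ge 1+(q\bmod k)\), that is, \(q\bmod k<F_{k-1}(q)\), since both sides are integers. This gives \eqref{eq:recurrence}. The base case \(F_1(q)=1\) is immediate, because no deletion occurs. The only point needing care is to state precisely that the restart convention makes the subsequent process depend only on ranks. Formally, one runs an induction on \(k\) in which the claim is that the process applied to any ordered list of length \(k\) returns its element of rank \(F_k(q)\). This is what makes the "transport by \(\sigma\)" step rigorous. The convention that \(q\bmod k\) is the least nonnegative residue makes everything valid for negative \(q\) as well.

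The remaining assertions follow at once. The increment \(\ind_{\{\cdot\}}\in\{0,1\}\) is nonnegative, so \(k\mapsto F_k(q)\) is nondecreasing. For the range, induct: \(F_1=1\), and if \(1\le F_{k-1}(q)\le k-1\), then \(1\le F_k(q)\le F_{k-1}(q)+1\le k\). This also agrees with \(F_k(q)\) being a label in \(\{1,\ldots,k\}\).
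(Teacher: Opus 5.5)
Your proof is correct and follows the same first-step (reverse-insertion) argument as the paper: delete rank $1+(q\bmod k)$, treat the rest as a restarting process on $k-1$ entries, and restore the deleted position, which raises the survivor's rank exactly when $q\bmod k<F_{k-1}(q)$. Your order-preserving map $\sigma$, the induction over arbitrary ordered lists, and the explicit check of the bounds and monotonicity only make precise what the paper leaves implicit.
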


\begin{proof}
Fix a population size \(k\ge2\), and write \(r=q\bmod k\).  The current rank
deleted at this stage is \(e=r+1\).  Relabel the shortened list by the ranks
\(1,\ldots,k-1\).  Since counting restarts at the leftmost survivor, the
remaining process is a restarting process on \(k-1\) entries.

Let \(j=F_{k-1}(q)\) be the rank of the eventual survivor in the shortened
list.  When the deleted position is restored, this rank changes from \(j\)
to \(j+1\) exactly when \(e\le j\), or equivalently when \(r<j\).  This
proves \eqref{eq:recurrence}.
\end{proof}

\begin{remark}
The threshold in \eqref{eq:recurrence} is strict because the deleted rank is
\(1+(q\bmod k)\).  This is also where the restarting process differs from
ordinary Josephus continuation.
\end{remark}

The recurrence also has a permutation interpretation.  Start with the
one-letter word \([1]\).  At stage \(k\), insert the new maximum letter \(k\)
in the zero-based position \(q\bmod k\).  The position of the letter \(1\),
counted from \(1\), changes by the indicator in \eqref{eq:recurrence}.
Thus \(F_n(q)\) is the position of \(1\) in the resulting permutation.  We
shall use this reverse-insertion model again when interpreting the limiting
random variable.

\subsection{Periodicity, reflection, and endpoints}

\begin{proposition}
\label{prop:elementary}
For every \(n\ge1\) and \(q\in\mathbb Z\), the following hold.
\begin{align}
 F_n(q+L_n)&=F_n(q),                                      \label{eq:period-easy}\\
 F_n(q)+F_n(-1-q)&=n+1.                                  \label{eq:reflection-int}
\end{align}
Equivalently, for \(0\le q<L_n\),
\begin{equation}
 F_n(q)+F_n(L_n-1-q)=n+1.                                \label{eq:reflection}
\end{equation}
Moreover,
\begin{align}
 F_n(q)=1
 &\quad\Longleftrightarrow\quad \gcd(q,L_n)=1,             \label{eq:left-endpoint}\\
 F_n(q)=n
 &\quad\Longleftrightarrow\quad \gcd(q+1,L_n)=1.           \label{eq:right-endpoint}
\end{align}
\end{proposition}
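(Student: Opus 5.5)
All four claims will be proved directly from the recurrence of \Cref{lem:recurrence}, by induction on the level \(k\).

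\emph{Periodicity.} For every \(k\le n\) we have \(k\mid L_n\), so \((q+L_n)\bmod k=q\bmod k\). The recurrence for \(F_n\) only consults the residues \(q\bmod k\) with \(2\le k\le n\). Hence the two trajectories \(F_k(q)\) and \(F_k(q+L_n)\) agree for every \(k\le n\), which gives \eqref{eq:period-easy}.

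\emph{Reflection.} We show the stronger statement
\[
 F_k(q)+F_k(-1-q)=k+1\qquad\text{for all } k\le n,
\]
by induction on \(k\). The case \(k=1\) reads \(1+1=2\). For the inductive step, write \(r=q\bmod k\). Then
\[
 (-1-q)\bmod k=k-1-r.
\]
Put \(a=F_{k-1}(q)\), so that \(F_{k-1}(-1-q)=k-a\) by induction. The increment indicator for \(-1-q\) is \(\ind_{\{k-1-r<k-a\}}=\ind_{\{r\ge a\}}\), the complement of \(\ind_{\{r<a\}}\). Summing the two recurrences gives
\[
 a+(k-a)+1=k+1.
\]
Taking \(k=n\) gives \eqref{eq:reflection-int}. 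For \(0\le q<L_n\) we have \(-1-q\equiv L_n-1-q \pmod{L_n}\), so the periodicity just proved yields \eqref{eq:reflection}.

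\emph{Left endpoint.} Since \(k\mapsto F_k(q)\) is nondecreasing and \(F_1=1\), we have \(F_n(q)=1\) iff no increment ever occurs. While \(F_{k-1}=1\), the increment at level \(k\) occurs iff \(q\bmod k<1\), i.e. iff \(k\mid q\). So \(F_n(q)=1\) iff no \(k\in\{2,\ldots,n\}\) divides \(q\). It suffices to test primes \(k=p\le n\), because any \(k\ge2\) dividing \(q\) has a prime factor \(p\le n\) that also divides \(q\). The condition "no prime \(p\le n\) divides \(q\)" is equivalent to \(\gcd(q,L_n)=1\). This proves \eqref{eq:left-endpoint}.

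\emph{Right endpoint.} By reflection, \(F_n(q)=n\) iff \(F_n(-1-q)=1\). By the left-endpoint criterion this holds iff \(\gcd(-1-q,L_n)=1\), i.e. \(\gcd(q+1,L_n)=1\), which is \eqref{eq:right-endpoint}.

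No step is a real obstacle. The only point needing care is the residue identity \((-1-q)\bmod k=k-1-r\), which uses the least-nonnegative-residue convention for negative arguments, together with the strictness of the threshold in the recurrence.
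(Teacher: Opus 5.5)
Your proof is correct and follows the paper's argument essentially step for step: periodicity from \(k\mid L_n\); reflection by induction on the level, using \((-1-q)\bmod k=k-1-r\) and the complementary strict thresholds; and the left-endpoint criterion from the absence of increments. The only deviation is that you derive the right-endpoint criterion from the left one via reflection, whereas the paper checks it directly by requiring an increment at every level, i.e.\ \(k\nmid q+1\) for \(2\le k\le n\); both routes are equally short and rigorous.
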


\begin{proof}
Every modulus \(k\le n\) divides \(L_n\).  Hence replacing \(q\) by \(q+L_n\)
preserves every residue in \eqref{eq:recurrence}, which proves
\eqref{eq:period-easy}.

We next prove the reflection identity.  Put \(q^\ast=-1-q\).  If
\(r=q\bmod k\), then
\[
 q^\ast\bmod k=k-1-r.
\]
Suppose inductively that the two states at level \(k-1\) are \(s\) and
\(k-s\).  Their increment tests at level \(k\) are
\[
 r<s
 \quad\text{and}\quad
 k-1-r<k-s.
\]
The second condition is equivalent to \(r\ge s\).  Thus exactly one
trajectory increments.  Since \(F_1(q)=F_1(q^\ast)=1\), induction gives
\eqref{eq:reflection-int}.  Reducing modulo \(L_n\) gives
\eqref{eq:reflection}.

It remains to prove the endpoint identities.  The state starts at \(1\) and
only increases.  Thus \(F_n(q)=1\) exactly when no increment occurs.  In
this case, the state before every level is \(1\), and the condition is
\[
 q\bmod k\ne0\qquad(2\le k\le n).
\]
This is equivalent to \(\gcd(q,L_n)=1\).

Similarly, \(F_n(q)=n\) exactly when every level increments.  The state
before level \(k\) is then \(k-1\), and the increment condition becomes
\[
 q\bmod k<k-1
 \quad\Longleftrightarrow\quad
 q\bmod k\ne k-1
 \quad\Longleftrightarrow\quad
 k\nmid q+1.
\]
This condition holds for every \(2\le k\le n\) if and only if
\(\gcd(q+1,L_n)=1\).  This completes the proof.
\end{proof}

\begin{definition}
For \(1\le j\le n\), define the \emph{fiber size} of \(j\) over one complete
\(L_n\)-block by
\[
 N_{n,j}=\#\{0\le q<L_n:F_n(q)=j\}.
\]
\end{definition}

\begin{corollary}
\label{cor:counts}
For every \(n\ge1\),
\begin{align}
 N_{n,j}&=N_{n,n+1-j},                                  \label{eq:count-symmetry}\\
 N_{n,1}=N_{n,n}&=\varphi(L_n),                          \label{eq:endpoint-counts}\\
 \frac1{L_n}\sum_{q=0}^{L_n-1}F_n(q)&=\frac{n+1}{2}.      \label{eq:mean}
\end{align}
\end{corollary}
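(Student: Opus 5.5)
All three identities come directly from \Cref{prop:elementary}, by counting over one complete block \(0\le q<L_n\).

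For the symmetry \eqref{eq:count-symmetry}, I will use the involution \(\sigma(q)=L_n-1-q\) on \(\{0,\ldots,L_n-1\}\). It is a bijection of this set onto itself, and by \eqref{eq:reflection} we have \(F_n(\sigma(q))=n+1-F_n(q)\). So \(\sigma\) maps the fiber \(\{q:F_n(q)=j\}\) bijectively onto the fiber \(\{q:F_n(q)=n+1-j\}\), and the two fibers have the same size.

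For the endpoint counts \eqref{eq:endpoint-counts}, criterion \eqref{eq:left-endpoint} identifies \(N_{n,1}\) with the number of \(q\in\{0,\ldots,L_n-1\}\) satisfying \(\gcd(q,L_n)=1\). This is \(\varphi(L_n)\) by definition of Euler's function. The case \(n=1\) needs a brief check: here \(L_1=1\), only \(q=0\) occurs, \(\gcd(0,1)=1\), and this agrees with the convention \(\varphi(1)=1\). The equality \(N_{n,n}=N_{n,1}\) then follows from the symmetry just proved with \(j=1\). Alternatively, one can use \eqref{eq:right-endpoint} and note that \(q\mapsto q+1\) permutes the residues modulo \(L_n\).

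For the mean \eqref{eq:mean}, I will sum the reflection identity \eqref{eq:reflection} over the whole block:
\[
2\sum_{q=0}^{L_n-1}F_n(q)=\sum_{q=0}^{L_n-1}\bigl(F_n(q)+F_n(L_n-1-q)\bigr)=L_n(n+1).
\]
The first equality holds because \(\sigma\) is a bijection of the block, so the second sum re-indexes the first. Dividing by \(2L_n\) gives \((n+1)/2\).

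None of these steps is a real obstacle. The only care needed is with the convention that \(q\bmod k\) is the least nonnegative residue, and with the degenerate level \(n=1\); both are already covered by \Cref{prop:elementary}.
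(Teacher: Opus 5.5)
Your proposal is correct and matches the paper's proof: the involution \(q\mapsto L_n-1-q\) with \eqref{eq:reflection} gives both the symmetry and the mean, and the endpoint criteria count the units modulo \(L_n\). Deriving \(N_{n,n}=N_{n,1}\) from the symmetry at \(j=1\), rather than from translation by \(1\) as the paper does, is an equally valid minor variant.
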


\begin{proof}
By \eqref{eq:reflection}, the involution \(q\mapsto L_n-1-q\) gives
\eqref{eq:count-symmetry} and \eqref{eq:mean}.  The endpoint criteria count
the units modulo \(L_n\).  Since translation by \(1\) is a bijection modulo
\(L_n\), they also give \eqref{eq:endpoint-counts}.
\end{proof}

\subsection{Compatible residue vectors}

For \(2\le k\le n\), write \(c_k(q)=q\bmod k\), so that
\[
 \mathbf c_n(q)=(c_2(q),\ldots,c_n(q)).
\]
This residue vector determines the deletion sequence and hence \(F_n(q)\).
Its components are not independent.

By the general Chinese remainder theorem \citep{Ore1952}, a tuple
\[
 (c_2,\ldots,c_n)\in\prod_{k=2}^n\mathbb Z/k\mathbb Z
\]
is induced by an integer \(q\) if and only if
\begin{equation}
 c_i\equiv c_j\pmod{\gcd(i,j)}
 \qquad(2\le i,j\le n).
 \label{eq:crt-compat}
\end{equation}
In that case \(q\) is unique modulo \(L_n\).  Hence the induced tuples are
exactly the compatible residue vectors defined in \Cref{sec:introduction},
and there are \(L_n\) such vectors.

We shall use the compatibility condition \eqref{eq:crt-compat} repeatedly.
In particular, the thresholds \(q\bmod k\) cannot be treated as independent
random variables when the moduli have common factors.

\section{Exact periodicity}
\label{sec:period}

By \eqref{eq:period-easy}, \(L_n\) is a period.  We now show that no smaller
positive shift is a period.  The main idea is to construct a trajectory that
becomes constant.

For fixed \(q\), we say that the trajectory \(k\mapsto F_k(q)\)
\emph{freezes at level \(u\)} if
\[
 F_k(q)=F_u(q)\qquad(k\ge u).
\]

\begin{lemma}[Freezing]
\label{lem:freezing}
If \(u\) is a positive integer, then
\[
 F_k(u)=F_u(u)\qquad(k\ge u).
\]
\end{lemma}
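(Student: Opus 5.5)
We prove the statement by induction on \(k\ge u\), using the fundamental recurrence of \Cref{lem:recurrence}. The base case \(k=u\) is trivial. For the inductive step, suppose \(k>u\) and \(F_{k-1}(u)=F_u(u)\). Since \(q=u\) is a nonnegative integer smaller than \(k\), its least nonnegative residue is \(u\bmod k=u\). By \eqref{eq:recurrence}, the increment at level \(k\) occurs exactly when \(u<F_{k-1}(u)=F_u(u)\).

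The key step is therefore the bound \(F_u(u)\le u\), which is part of \Cref{lem:recurrence}, since \(1\le F_k(q)\le k\) for all \(k\). This makes the test \(u<F_u(u)\) false, so there is no increment and \(F_k(u)=F_{k-1}(u)=F_u(u)\). This closes the induction.

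There is essentially no obstacle. The one point needing care is that the residue convention gives \(u\bmod k=u\) precisely because \(0\le u<k\). This is where the hypothesis \(k\ge u\) is used (\(k>u\) in the inductive step). The same argument applies verbatim to any \(q\) with \(0\le q<k\) and \(q\ge F_{k-1}(q)-1\), but only the stated case \(q=u\) is needed for the period argument that follows.
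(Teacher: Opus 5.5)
Your proof is correct and is the paper's argument: for $k>u$ the residue $u\bmod k$ equals $u$, and since the state stays at $F_u(u)\le u$, the strict threshold never fires, so the state is frozen from level $u$ on. One small slip in your closing aside: for $0\le q<k$ the no-increment condition is $q\ge F_{k-1}(q)$, not $q\ge F_{k-1}(q)-1$, because $q=F_{k-1}(q)-1$ does produce an increment.
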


\begin{proof}
At level \(u\), the state \(F_u(u)\) is at most \(u\).  For every \(k>u\),
we have \(u\bmod k=u\).  Hence the strict threshold in
\eqref{eq:recurrence} is false as long as the state is at most \(u\).
Therefore, the state does not change after level \(u\).
\end{proof}

\begin{remark}
We state the lemma only for \(u\ge1\).  It cannot be formulated at \(u=0\)
because \(F_0\) is undefined.  On the other hand, \eqref{eq:recurrence}
gives \(F_k(0)=k\) for \(k\ge1\), so no analogous freezing occurs.
\end{remark}

\begin{lemma}
\label{lem:shift-detector}
Fix \(n\ge2\).  Let \(D>0\) satisfy \(L_n\nmid D\), and define the least
nondividing level
\[
 H=\min\{k\in\{2,\ldots,n\}:k\nmid D\}.
\]
Let
\[
 q=(-D)\bmod H,
\]
represented in \(\{0,\ldots,H-1\}\).  Then \(1\le q\le H-1\) and
\[
 F_n(q+D)\ne F_n(q).
\]
\end{lemma}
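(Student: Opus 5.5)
The idea is to compare the two trajectories \(k\mapsto F_k(q)\) and \(k\mapsto F_k(q+D)\) level by level. They agree up to level \(H-1\) and split at level \(H\). After that, \Cref{lem:freezing} holds one of them fixed, and the monotonicity in \Cref{lem:recurrence} keeps the other at least as large as its value after the split.

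First, \(H\) is well defined. If every \(k\in\{2,\ldots,n\}\) divided \(D\), then \(L_n\mid D\), contrary to the hypothesis. So \(2\le H\le n\). Since \(H\nmid D\), the residue \(q=(-D)\bmod H\) is nonzero, which gives \(1\le q\le H-1\). By the choice of \(q\), we have \(q+D\equiv 0\pmod H\), so
\[
 (q+D)\bmod H=0,\qquad q\bmod H=q.
\]
By minimality of \(H\), every \(k\in\{2,\ldots,H-1\}\) divides \(D\). Hence \((q+D)\bmod k=q\bmod k\) for these \(k\). The recurrence \eqref{eq:recurrence} uses only these residues up to level \(H-1\), so induction from \(F_1=1\) gives
\[
 F_{H-1}(q+D)=F_{H-1}(q)=:s .
\]

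Next, we identify \(s\) and compare the two trajectories at level \(H\). Since \(1\le q\le H-1\), \Cref{lem:freezing} with \(u=q\) gives \(F_k(q)=F_q(q)\) for all \(k\ge q\). In particular \(s=F_q(q)\le q\), and
\[
 F_n(q)=s .
\]
This can also be checked directly at level \(H\): the test for \(q\) is \(q<s\), which fails because \(s\le q\). For \(q+D\), the test at level \(H\) is \(0<s\), which holds because \(s\ge1\). Hence
\[
 F_H(q+D)=s+1 .
\]
Since \(k\mapsto F_k(q+D)\) is nondecreasing by \Cref{lem:recurrence},
\[
 F_n(q+D)\ge s+1>s=F_n(q),
\]
which proves the claim.

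I do not expect a serious obstacle here; the argument is short. The one step needing care is the final comparison. The freezing lemma is what makes \(F_n(q)\) equal exactly \(s\), because without it the trajectory of \(q\) could increase again at later levels. The trajectory of \(q+D\) does not freeze, but it does not need to, since monotonicity alone gives \(F_n(q+D)\ge s+1\). The edge case \(H=2\) causes no trouble: then \(q=1\), \(s=F_1=1\), and the same computation applies.
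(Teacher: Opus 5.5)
Your proposal is correct and follows essentially the same route as the paper's proof: agreement of residues below the least nondividing level $H$, freezing of the trajectory of $q$ via \cref{lem:freezing} with $u=q$, the split at level $H$ (threshold $q<s$ fails, $0<s$ holds), and monotonicity for the shifted trajectory. Your explicit check that $H$ is well defined is a small addition the paper leaves implicit.
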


\begin{proof}
Since \(H\nmid D\), the residue \(q\) is nonzero.  By the choice of \(H\),
\[
 k\mid D\qquad(1\le k<H).
\]
Thus \(q\) and \(q+D\) have the same residue at every recurrence level below
\(H\).  By \eqref{eq:recurrence},
\[
 F_{H-1}(q)=F_{H-1}(q+D)=:x.
\]
Because \(1\le q\le H-1\), we may apply \cref{lem:freezing} with
\(u=q\) and \(k=H-1\).  Hence
\[
 x=F_q(q),\qquad 1\le x\le q.
\]

At level \(H\), however, the two residues are
\[
 q\bmod H=q,\qquad (q+D)\bmod H=0.
\]
The unshifted threshold \(q<x\) is false, including the boundary case
\(x=q\), whereas the shifted threshold \(0<x\) is true.  Hence
\[
 F_H(q)=x,\qquad F_H(q+D)=x+1.
\]
By \cref{lem:freezing}, the first trajectory remains equal to \(x\) at every
later level.  The second trajectory can only stay fixed or increase, so it
remains at least \(x+1\).  Hence the two trajectories cannot meet again.
\end{proof}

\begin{maintheorem}[Exact period group]
\label{thm:exact-period}
For every \(n\ge1\),
\[
 \Per(F_n)
 :=\{D\in\mathbb Z:F_n(q+D)=F_n(q)\text{ for every }q\in\mathbb Z\}
 =L_n\mathbb Z.
\]
In particular, the least positive period of \(F_n\) is \(L_n\).
\end{maintheorem}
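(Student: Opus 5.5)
The plan is to prove the two inclusions separately and to get the nontrivial one directly from \cref{lem:shift-detector}.

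First I will check that \(\Per(F_n)\) is a subgroup of \(\mathbb Z\). If \(D\) and \(D'\) are periods, then \(F_n(q+D-D')=F_n((q-D')+D)=F_n(q-D')=F_n(q)\) for every \(q\), and \(0\) is trivially a period. By \eqref{eq:period-easy}, \(L_n\) is a period, so the group contains \(L_n\mathbb Z\).

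For the reverse inclusion, I will first dispose of \(n=1\). There \(L_1=1\) and \(F_1\equiv1\), so both sides equal \(\mathbb Z\). For \(n\ge2\), let \(D\in\Per(F_n)\) and suppose for contradiction that \(L_n\nmid D\). In particular \(D\ne0\). Since \(\Per(F_n)\) is a group, \(-D\) is also a period, so I may assume \(D>0\). Then \(L_n\nmid D\) says exactly that some \(k\in\{2,\ldots,n\}\) fails to divide \(D\), because \(L_n\) is the least common multiple of these \(k\). Hence the least nondividing level \(H\) of \cref{lem:shift-detector} is well defined. The lemma then produces an explicit \(q\), namely \(q=(-D)\bmod H\), with \(F_n(q+D)\ne F_n(q)\). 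This contradicts \(D\) being a period, so \(\Per(F_n)\subseteq L_n\mathbb Z\).

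Equality \(\Per(F_n)=L_n\mathbb Z\) follows, and since \(L_n>0\), the least positive period is \(L_n\). All the substantive work, namely the freezing witness, is already contained in \cref{lem:freezing} and \cref{lem:shift-detector}. The only points needing care are the reduction to \(D>0\), which uses the group structure, and the separate treatment of \(n=1\), where the lemma's hypothesis \(n\ge2\) does not apply.
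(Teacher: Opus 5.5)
Your proposal is correct and takes essentially the same route as the paper. Both use \eqref{eq:period-easy} for \(L_n\mathbb Z\subseteq\Per(F_n)\), the witness \(q=(-D)\bmod H\) from \cref{lem:shift-detector} for positive nonmultiples of \(L_n\), the subgroup property to reduce negative shifts to positive ones, and a separate check at \(n=1\).
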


\begin{proof}
For \(n=1\), the result follows from \(F_1\equiv1\) and \(L_1=1\).  Let
\(n\ge2\).  Every multiple of \(L_n\) preserves all residues in
\eqref{eq:recurrence}.  Hence \(L_n\mathbb Z\subseteq\Per(F_n)\).

Conversely, suppose that \(D>0\) is not divisible by \(L_n\).  By
\cref{lem:shift-detector}, there is a \(D\)-dependent witness \(q\) such that
\(F_n(q+D)\ne F_n(q)\).  Thus \(D\) is not a period.

It remains to consider negative shifts.  We first observe that the periods
form an additive subgroup of \(\mathbb Z\).  The shift \(0\) is a period.  If
\(s\) and \(t\) are periods, then
\[
 F_n(q+s+t)=F_n(q+s)=F_n(q),
\]
so \(s+t\) is a period.  If \(t\) is a period, substituting \(q-t\) for
\(q\) in its period identity gives \(F_n(q)=F_n(q-t)\), so \(-t\) is also
a period.  Thus a negative integer \(D\) is a period if and only if
\(-D\) is a period.  Applying the positive-shift result to \(-D\), we
conclude that no integer outside \(L_n\mathbb Z\) is a period.
\end{proof}

\begin{remark}
The witness \(q\) may depend on the proposed shift \(D\).  This is sufficient
because a period must preserve \(F_n(q)\) for every \(q\), whereas one
witness is enough to disprove it.  The minimality of \(H\), the sign
\(q=(-D)\bmod H\), and the strict threshold are all essential.  The proof
uses neither independence nor a Chinese-remainder assumption.
\end{remark}

\section{Surjectivity of the survivor map}
\label{sec:range}

\begin{maintheorem}[Surjectivity]
\label{thm:surjectivity}
For every integer \(n\ge1\) and every \(1\le j\le n\), there exists
\(q\in\{0,\ldots,L_n-1\}\) such that
\[
 F_n(q)=j.
\]
Equivalently,
\[
 \{F_n(q):q\in\mathbb Z\}
 =\{F_n(q):0\le q<L_n\}
 =\{1,\ldots,n\}.
\]
\end{maintheorem}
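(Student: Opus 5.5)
The idea is to construct, for each target label \(j\), a compatible residue vector whose trajectory ends exactly at \(j\). I would then assemble these constructions across a range of \(j\) using primes in \((n/2,n]\).

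\textbf{Basic device.} By \cref{lem:freezing}, for \(1\le u<n\) the trajectory of \(q=u\) freezes at \(F_u(u)\). More generally, any \(q\) that agrees with a small integer \(u\) modulo every \(k\le u\) starts out the same way. After that we can control individual increments at later levels. The key observation is about a prime \(p\) with \(n/2<p\le n\). Since \(p\) is the only multiple of itself in \(\{2,\ldots,n\}\), the residue \(q\bmod p\) is unconstrained by \eqref{eq:crt-compat} relative to the other coordinates. So at level \(p\) we may choose freely whether to increment, by taking \(q\bmod p=0\) (increment) or \(q\bmod p=p-1\) (no increment, since the state is at most \(p-1\)).

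\textbf{Producing intervals of labels.} Fix a base residue pattern on the composite-generated part, for example \(q\equiv u\pmod{L'}\), where \(L'=\lcm\) of all \(k\le n\) that are not primes exceeding \(n/2\). Let \(s\) be the resulting state just below the large primes. Then choose \(q\bmod p\) independently for each large prime. Between consecutive large primes the levels \(k\) are non-prime-large and their residues are fixed by the base pattern. I want those levels to be inert: with \(q\equiv u\) and \(u\) small, we have \(q\bmod k=u\) for \(k>u\), so the threshold \(u<\text{state}\) fails exactly when the state is at most \(u\). This is the point to be careful about. Once increments push the state above \(u\), the inert levels start incrementing too. So I would instead take the base \(q\equiv -1\) or a suitable negative shift on the non-large-prime moduli, and use reflection \eqref{eq:reflection-int} to symmetrize. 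The aim is a family of labels of the form \(s+t\), \(0\le t\le \pi(n)-\pi(n/2)\), or a contiguous run of similar shape, for each base \(u\). Varying \(u\) over \(1,\ldots\) shifts the starting state \(F_u(u)\). By \cref{prop:elementary} the endpoints \(1,n\) are attained, and symmetry halves the work: it suffices to hit every \(j\le (n+1)/2\).

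\textbf{Joining intervals and the finite certificate.} The intervals obtained from consecutive bases must overlap. Their length is roughly the number of free primes in \((n/2,n]\), and the gaps between successive starting points are controlled by the gaps between the values \(F_u(u)\) or between consecutive primes. I would formalize a sufficient condition of the form ``every interval \((x,x+x/c]\) with \(x\ge x_0\) contains a prime''. Dusart's explicit bound (a prime in \((x,x(1+1/(25\log^2x))]\) for \(x\ge 396738\)) guarantees this for all \(n\) beyond an explicit \(N_0\). For \(n<N_0\) I would verify surjectivity by an exact computation: for each \(n\), either check the short-prime-interval condition directly or exhibit explicit CRT witnesses \(q\). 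The main obstacle is the middle step, namely pinning down a base pattern so that the non-prime levels between large primes behave predictably, either staying inert or incrementing deterministically, whatever the current state. That is what makes the reachable labels form genuine consecutive intervals. I would first try bases \(q\equiv u\pmod{L'}\) with \(u\) below \(n/2\), and track the state explicitly.
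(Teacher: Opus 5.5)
There is a genuine gap, and it is the step you flag as the main obstacle. You never exhibit a residue pattern whose survivor labels form a provable contiguous run, and neither candidate base you mention supplies one.

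\emph{The base $q\equiv-1$.} Impose $q\equiv-1$ on every modulus other than the primes in $(n/2,n]$. Then each such level $k$ has residue $k-1\ge F_{k-1}(q)$ and never increments. Only the large primes can move the state, each by at most one. You therefore reach only labels in $[1,1+\pi(n)-\pi(n/2)]$ and their reflections, which for large $n$ misses most of the middle.

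\emph{The base $q\equiv u\pmod{L'}$.} A ``negative shift'' $q\equiv-1-u$ is just the reflection of this case. Here the state entering the large primes is $F_u(u)$. The run $F_u(u)+t$ exists only while $F_u(u)+t\le u$. To join such runs you would need control of both the spacing of the values $F_u(u)$ and the slack $u-F_u(u)$ as $u$ varies, and nothing in your proposal provides it. The one case where $F_u(u)$ is known is $u+1$ prime: the right-endpoint criterion ($F_u(q)=u$ iff $\gcd(q+1,L_u)=1$) gives $F_u(u)=u$, but then there is no slack at all. Without a concrete construction, there is also no precise inequality for Dusart's bound or for the finite computation to verify.

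The missing idea is to stop trying to keep the intermediate levels inert and to let the cascade you were worried about do the work.
\begin{itemize}
\item Take $a=P-1$ with $P\le R$ prime. No $2\le k\le a$ divides $P$, so $a\bmod k\ne k-1$ there. Every level up to $a$ increments, and the state reaches exactly $a$.
\item Impose $q\equiv a$ on every prime-power component except two kinds. At one prime $R\in(n/2,n]$ set $q\equiv0$. At a chosen set $\mathcal B$ of primes in $(R,n]$ set $q\equiv-1$.
\item For $a<k<R$ the residue equals the state, so these levels are inert. Level $R$ pushes the state to $P>a$.
\item From then on, every level with residue $a$ increments, and each $\ell\in\mathcal B$ is a blocking stage.
\end{itemize}
Hence $F_n(q)=n-R+P-t$ with $t=|\mathcal B|$. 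By reflection, every label in $[R-P+1,\,R-P+1+H]$ is attained, where $H=\pi(n)-\pi(R)$.

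So the prime $P$ sets the coarse position and $t$ the fine one. Covering all $j=h+1\le(n+1)/2$ then reduces to two cases. For $h\le H$, take $P=R$. For $H<h\le\lfloor(n-1)/2\rfloor$, you need a prime in $[x,x+H]$ for every $x=R-h$.

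Take $R$ to be the least prime above $2n/3$. Then for large $n$ you get $H>n/(5\log n)$ and $x>n/6$. This is where Dusart's short-interval and $\pi(x)$ bounds close the tail. The finite range reduces to checking this single prime-gap inequality, plus a direct search for small $n$, where the inequality genuinely fails (e.g.\ at $n=228$).
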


We prove the theorem in four steps.  We first give a prime-power CRT
construction and derive a prime-interval covering criterion.  We then verify
the criterion analytically for large \(n\) and use a finite exact certificate
for the remaining cases.

\subsection{A prime-power CRT interval construction}

For each prime \(p\le n\), let
\[
 M_p=p^{e_p}\le n<pM_p
\]
be the largest power of \(p\) not exceeding \(n\).  Then
\[
 L_n=\prod_{p\le n}M_p,
\]
and the moduli \(M_p\) are pairwise coprime.  By the ordinary Chinese
remainder theorem, a class modulo \(L_n\) is uniquely specified by the
residues \(q\bmod M_p\).  We call these residues the \emph{prime-power CRT
components} of \(q\).

For a fixed \(q\), call a recurrence stage \(k\ge2\) a
\emph{blocking stage} if
\[
 q\bmod k\ge F_{k-1}(q),
\]
so that the state does not increment at stage \(k\).  A prime \(\ell\) is
called a \emph{blocking prime} if stage \(\ell\) is blocking.

\begin{lemma}[CRT interval construction with blocking primes]
\label{lem:blocking-prime-interval}
Let \(R,P\) be primes satisfying
\[
 \frac n2<R\le n,\qquad P\le R,
\]
and put \(a=P-1\).  Let \(\mathcal B\) be any set of primes in
\((R,n]\), and write \(t=|\mathcal B|\).  Prescribe the prime-power CRT
components
\begin{align}
 q&\equiv0\pmod R,                                      \label{eq:blocking-R}\\
 q&\equiv-1\pmod\ell\qquad(\ell\in\mathcal B),           \label{eq:blocking-B}\\
 q&\equiv a\pmod{M_p}
 \quad\text{for every other prime }p\le n.              \label{eq:blocking-default}
\end{align}
There is a unique solution \(q\bmod L_n\), and it satisfies
\begin{align}
 F_n(q)&=n-R+P-t,                                       \label{eq:blocking-output}\\
 F_n(L_n-1-q)&=R-P+1+t.                                 \label{eq:blocking-reflected}
\end{align}
\end{lemma}

\begin{proof}
Since \(R>n/2\), the largest \(R\)-power at most \(n\) is \(R\) itself.  The
same holds for every \(\ell\in\mathcal B\).  Hence
\eqref{eq:blocking-R}--\eqref{eq:blocking-default} prescribe one residue for
each pairwise coprime prime-power modulus, and the ordinary Chinese remainder
theorem gives a unique class modulo \(L_n\).

We first establish a projection property.  Let \(k\le n\) be neither \(R\)
nor an element of \(\mathcal B\).  Then neither \(R\) nor any
\(\ell\in\mathcal B\) divides \(k\), since otherwise \(k\) would be at least
twice a prime exceeding \(n/2\).  Therefore, every prime-power modulus in the
factorisation of \(k\) divides an unmodified \(M_p\).  By
\eqref{eq:blocking-default},
\begin{equation}
 q\equiv a\pmod k.
 \label{eq:projection-a}
\end{equation}

For \(2\le k\le a=P-1\), the equality \(a\bmod k=k-1\) would imply
\(k\mid a+1=P\), impossible because \(P\) is prime and \(k<P\).  Hence
\[
 a\bmod k\le k-2.
\]
By induction in \eqref{eq:recurrence}, every level \(2,\ldots,a\) increments,
so \(F_a(q)=a\).  Next, let \(a<k<R\).  By
\eqref{eq:projection-a}, \(q\bmod k=a\), which is equal to the current state.
Thus the strict threshold is false, and
\[
 F_{R-1}(q)=a.
\]
If \(P=2\), then \(a=1\) and the first interval is empty; the same conclusion
holds.

At level \(R\), the residue \(0<a\) forces an increment, so
\[
 F_R(q)=a+1=P.
\]
We now consider \(k>R\).  Suppose that \(b\) elements of \(\mathcal B\) have
occurred in \(\{R+1,\ldots,k-1\}\).  By induction, the pre-\(k\) state is
\[
 F_{k-1}(q)=P+(k-R-1)-b=a+k-R-b.
\]
If \(k\notin\mathcal B\), then \(q\bmod k=a\), and
\[
 a<a+k-R-b
\]
because \(b\le k-R-1\); thus level \(k\) increments.  If
\(k\in\mathcal B\), then \(q\bmod k=k-1\), while
\[
 (k-1)-F_{k-1}(q)=R-P+b\ge0.
\]
The strict threshold is false, also in the case of equality.  Thus exactly
the \(t\) elements of \(\mathcal B\) are blocking stages after \(R\).  This
proves \eqref{eq:blocking-output}, and
\eqref{eq:blocking-reflected} follows from reflection.
\end{proof}

Let \(\pi(x)\) be the prime-counting function.  For a fixed prime
\(R\in(n/2,n]\), put
\[
 H_R=\pi(n)-\pi(R).
\]
Since \(t\) may be any integer from \(0\) to \(H_R\), for every prime
\(P\le R\), \cref{lem:blocking-prime-interval} gives every position in the
interval
\begin{equation}
 [R-P+1,\ R-P+1+H_R]
 \label{eq:blocking-interval}
\end{equation}
of reflected survivor positions.

\subsection{A prime-interval covering criterion}

\begin{lemma}[Prime-interval covering criterion]
\label{lem:prime-interval-covering}
Let \(R\le n\) be the least prime strictly greater than \(2n/3\), and put
\[
 H=\pi(n)-\pi(R),\qquad
 A=R-\left\lfloor\frac{n-1}{2}\right\rfloor,\qquad
 C=R-H-1.
\]
Suppose that every integer \(x\in[A,C]\) has a prime in the closed interval
\([x,x+H]\).  Then \(F_n\) assumes every value in \(\{1,\ldots,n\}\).
\end{lemma}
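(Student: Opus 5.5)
The plan is to combine the interval family \eqref{eq:blocking-interval} from \cref{lem:blocking-prime-interval} with the reflection identity \eqref{eq:reflection}. By the statement, \(R\) is prime and \(2n/3<R\le n\), so in particular \(n/2<R\le n\). Hence \cref{lem:blocking-prime-interval} applies with this \(R\) and with any prime \(P\le R\). Taking \(t\in\{0,\ldots,H\}\), where \(H=H_R=\pi(n)-\pi(R)\), the reflected value \(F_n(L_n-1-q)\) then takes every value in
\[
 [R-P+1,\ R-P+1+H].
\]
By \eqref{eq:reflection}, each attained value \(j\) also yields the attained value \(n+1-j\). It therefore suffices to show that every \(j\) with \(1\le j\le\lfloor (n+1)/2\rfloor\) is attained. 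Indeed, the reflections \(n+1-j\) then cover every value from \(n+1-\lfloor(n+1)/2\rfloor=\lceil(n+1)/2\rceil\) up to \(n\), and the two ranges together exhaust \(\{1,\ldots,n\}\). The case \(n=1\) is trivial, so we assume \(n\ge2\). One could instead obtain \(j=1\) directly from \eqref{eq:left-endpoint}, but it falls out of the argument anyway.

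The key reformulation is as follows. A value \(j\) lies in the interval attached to the prime \(P\) exactly when \(R-P+1\le j\le R-P+1+H\). Putting \(x=R+1-j\), this condition reads \(P\in[x,x+H]\). So for each target \(j\) we must find a prime \(P\le R\) in \([x,x+H]\). As \(j\) ranges over \([1,\lfloor(n+1)/2\rfloor]\), the quantity \(x\) ranges over
\[
 \bigl[R+1-\lfloor (n+1)/2\rfloor,\ R\bigr]=[A,R],
\]
using \(\lfloor (n+1)/2\rfloor=\lfloor (n-1)/2\rfloor+1\).

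We then split into two cases according to the size of \(x\).

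\emph{Case \(x\ge C+1=R-H\).} Here \(x\le R\le x+H\), so the choice \(P=R\) works. This case covers exactly the targets \(j\le H+1\), via the interval \([1,1+H]\).

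\emph{Case \(A\le x\le C\).} The hypothesis provides a prime \(P\in[x,x+H]\). Since \(x+H\le C+H=R-1<R\), we get \(P\le R\) automatically. Moreover \(P\ge2\) holds simply because \(P\) is prime, so no lower-end issue arises even if \(x\) is small. If \(A>C\), this case is empty and only the first case is needed.

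Since the two cases together cover all of \([A,R]\), every \(j\le\lfloor(n+1)/2\rfloor\) is attained, and reflection finishes the proof.

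The main obstacle is purely bookkeeping. We must check that the index shift \(x=R+1-j\) lines up exactly with the constants \(A\) and \(C\), and that the constraint \(P\le R\) is never violated. The inequality \(x+H\le R-1\) handles the second case, and the choice \(P=R\) handles the first. Because \(\lfloor(n+1)/2\rfloor\ge\lceil(n+1)/2\rceil-1\), the lower half and its reflection leave no gap in the middle for either parity of \(n\).
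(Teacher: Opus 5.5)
Your proposal is correct and follows essentially the same route as the paper. Both arguments reduce by reflection to \(1\le j\le\lfloor(n+1)/2\rfloor\), take \(P=R\) when \(j\le H+1\), and otherwise use the hypothesized prime \(P\in[x,x+H]\) with \(x=R+1-j\) (the paper's \(x=R-h\), \(h=j-1\)) and \(t=P-x\), noting that \(P\le x+H\le R-1\). Your equivalence \(j\in[R-P+1,R-P+1+H]\Longleftrightarrow P\in[x,x+H]\) is the same step as the paper's explicit choice \(t=P-x\), stated as an interval condition.
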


\begin{proof}
By reflection, it suffices to construct
\[
 1\le j\le\left\lfloor\frac{n+1}{2}\right\rfloor.
\]
Write \(h=j-1\), so
\[
 0\le h\le\left\lfloor\frac{n-1}{2}\right\rfloor.
\]
First suppose that \(h\le H\).  Apply
\cref{lem:blocking-prime-interval} with \(P=R\) and \(t=h\).  The reflected
value is
\[
 R-P+1+t=h+1=j.
\]

Now suppose that \(h>H\), and put \(x=R-h\).  Then \(A\le x\le C\).  By the
hypothesis, there is a prime \(P\in[x,x+H]\).  Set \(t=P-x\).  We have
\(0\le t\le H\), and
\[
 P\le x+H=R-h+H\le R-1.
\]
Thus \cref{lem:blocking-prime-interval} applies and gives
\[
 R-P+1+t
 =R-P+1+P-(R-h)
 =h+1=j.
\]
Finally, reflection gives the upper half.  When \(n\) is odd, the two halves
meet at the central label.
\end{proof}

Let \(\nextprime(x)\) denote the least prime not smaller than the integer
\(x\).  The hypothesis of \cref{lem:prime-interval-covering} is equivalent
to the finite inequality
\begin{equation}
 \max_{A\le x\le C}\bigl(\nextprime(x)-x\bigr)\le H,
 \label{eq:covering-inequality}
\end{equation}
with an empty maximum interpreted as a vacuous condition.

\subsection{The analytic tail}

We use the following explicit estimates of
\citet[Proposition~6.8 and Theorem~6.9, equation~(6.6)]{Dusart2010}.  For
every real \(x>396738\), there is a prime \(p\) such that
\begin{equation}
 x<p\le x\left(1+\frac1{25\log^2x}\right).
 \label{eq:dusart-short-interval}
\end{equation}
The same source gives
\begin{align}
 \pi(x)&\ge\frac{x}{\log x-1}
 &&(x>5393),                                           \label{eq:dusart-pi-lower}\\
 \pi(x)&\le\frac{x}{\log x-1.1}
 &&(x>60184).                                          \label{eq:dusart-pi-upper}
\end{align}

\begin{proposition}
\label{prop:analytic-covering}
For every
\[
 n\ge N_0:=2\,380\,429=6\cdot396738+1,
\]
the hypothesis of \cref{lem:prime-interval-covering} holds.
\end{proposition}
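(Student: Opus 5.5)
We verify the three ingredients of \cref{lem:prime-interval-covering} in turn for \(n\ge N_0\). The plan is to find a prime \(R\) just above \(2n/3\), show that \(H=\pi(n)-\pi(R)\) is of order \(n/\log n\), and then use \eqref{eq:dusart-short-interval} to show that every \(x\in[A,C]\) has a prime gap after it far smaller than \(H\).

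\emph{Step 1 (the prime \(R\)).} Since \(2n/3\ge 2N_0/3>396738\), applying \eqref{eq:dusart-short-interval} with \(x=2n/3\) gives a prime in \(\bigl(2n/3,\ (2n/3)(1+1/(25\log^2(2n/3)))\bigr]\). Hence
\[
 R\le \frac{2n}{3}\Bigl(1+\frac1{25\log^2(2n/3)}\Bigr)<n .
\]
In particular \(R\le n\), \(R>n/2\), and \(R=2n/3+O(n/\log^2 n)\) with an explicit constant.

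\emph{Step 2 (lower bound for \(H\)).} Write \(H=\pi(n)-\pi(R)\). We bound \(\pi(n)\) from below by \eqref{eq:dusart-pi-lower} and \(\pi(R)\) from above by \eqref{eq:dusart-pi-upper}; both thresholds hold since \(R>2n/3>60184\). Setting \(R'=\frac{2n}{3}(1+\frac1{25\log^2(2n/3)})\) and using monotonicity of \(y\mapsto y/(\log y-1.1)\), we get
\[
 H\ge \frac{n}{\log n-1}-\frac{R'}{\log R'-1.1}.
\]
The leading terms are \(n/\log n-\tfrac23 n/\log n=\tfrac13 n/\log n\). The corrections from the constants \(1\) and \(1.1\), from \(\log R'<\log n\), and from the \(1/(25\log^2)\) factor are lower order, though they need explicit handling. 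The target is a clean bound such as
\[
 H\ge \frac{n}{4\log n}\qquad(n\ge N_0),
\]
proved by checking that a monotone explicit function is positive at \(N_0\) and increasing beyond it.

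\emph{Step 3 (gaps on \([A,C]\)).} Every \(x\in[A,C]\) satisfies \(x\ge A=R-\lfloor(n-1)/2\rfloor>2n/3-n/2=n/6\). The choice \(N_0=6\cdot396738+1\) is made precisely so that \(x\ge N_0/6>396738\). Also \(x\le C<R<n\). Applying \eqref{eq:dusart-short-interval} at the real number \(x-\tfrac12\) gives a prime \(p\) with \(x\le p\le x+1+x/(25\log^2x)\). Hence
\[
 \nextprime(x)-x\le 1+\frac{n}{25\log^2(n/6)} .
\]
It remains to show that this is at most \(n/(4\log n)\le H\). That reduces to \(4\log n\,(1+\ldots)\le 25\log^2(n/6)\)-type inequality, which holds comfortably at \(N_0\), where \(\log(n/6)\approx12.9\), and persists for larger \(n\). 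By \eqref{eq:covering-inequality}, this establishes the hypothesis of \cref{lem:prime-interval-covering}.

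The main obstacle is Step 2. It requires an honest explicit lower bound for \(\pi(n)-\pi(R)\) that survives the mixed constants \(-1\) and \(-1.1\) and the upward perturbation of \(R\), valid uniformly for all \(n\ge N_0\) rather than only asymptotically. I would first reduce it to showing that
\[
 g(n)=\frac{n}{\log n-1}-\frac{R'}{\log R'-1.1}-\frac{n}{4\log n}
\]
is positive, which follows from \(g(N_0)>0\) together with a derivative or monotonicity argument in \(\log n\). If that margin is too thin, I would weaken the target to \(H\ge n/(5\log n)\), which Step 3 still easily accommodates.
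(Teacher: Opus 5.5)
Your proposal is correct and follows essentially the same route as the paper: Dusart's short-interval bound at $2n/3$ places $R$, Dusart's bounds on $\pi$ give $H\gg n/\log n$, and the short-interval bound applied again at $x>n/6$ makes every gap in $[A,C]$ smaller than $H$. The step you leave unexecuted (Step~2) becomes a one-line check if, as the paper does, you replace $R'$ by the cruder bound $R<3n/4$ and aim for $H>n/(5\log n)$, which reduces to $\Lambda^2/4-0.85\Lambda-1.4>0$ for $\Lambda=\log n\ge5$; your sharper target $n/(4\log n)$ via $R'$ is also true with margin, and your shift to $x-\tfrac12$ in Step~3 is harmless but unnecessary, since a prime in $(x,x+H]$ already suffices.
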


\begin{proof}
Let \(R\) be the least prime strictly greater than \(2n/3\).  Apply
\eqref{eq:dusart-short-interval} to \(y=2n/3\).  Since \(y>396738\) and
\[
 1+\frac1{25\log^2y}<1.04<\frac98,
\]
we obtain
\begin{equation}
 \frac{2n}{3}<R<\frac{3n}{4}.
 \label{eq:R-bounds}
\end{equation}

Next, put \(\Lambda=\log n\) and \(H=\pi(n)-\pi(R)\).  We have
\(\Lambda>5\).  The function
\[
 g(u)=\frac{u}{\log u-1.1}
\]
has derivative
\[
 g'(u)=\frac{\log u-2.1}{(\log u-1.1)^2}>0
 \qquad(u>e^{2.1}).
\]
Thus \(g\) is increasing on the required range.  Since
\(R>2n/3>60184\) and \(R<3n/4\),
\eqref{eq:dusart-pi-lower}--\eqref{eq:dusart-pi-upper} give
\[
 \frac Hn
 \ge \frac1{\Lambda-1}
 -\frac{3/4}{\Lambda-c},
 \qquad
 c=1.1+\log(4/3)<1.4.
\]
Therefore,
\[
 \frac Hn
 >
 \frac1{\Lambda-1}
 -\frac{3/4}{\Lambda-1.4}
 =
 \frac{\Lambda/4-0.65}{(\Lambda-1)(\Lambda-1.4)}.
\]
For \(\Lambda\ge5\),
\[
 5\Lambda(\Lambda/4-0.65)-(\Lambda-1)(\Lambda-1.4)
 =\frac{\Lambda^2}{4}-0.85\Lambda-1.4>0.
\]
Thus
\begin{equation}
 H>\frac{n}{5\log n}.
 \label{eq:H-lower}
\end{equation}

We next take an integer
\[
 H<h\le\left\lfloor\frac{n-1}{2}\right\rfloor
\]
and put \(x=R-h\).  By \eqref{eq:R-bounds},
\[
 x>\frac{2n}{3}-\frac{n-1}{2}
 =\frac n6+\frac12>396738.
\]
When \(n=N_0\), integrality gives the sharper bound \(x\ge396739\).
Therefore, the strict threshold in \eqref{eq:dusart-short-interval} is
satisfied.

The derivative of \(u/\log^2u\) is
\[
 \frac{\log u-2}{\log^3u}>0\qquad(u>e^2),
\]
so this function is increasing on the required range.  Apply
\eqref{eq:dusart-short-interval} to \(x\).  Since \(x<R<3n/4\), there is a
prime \(P\) such that
\[
 0<P-x
 \le\frac{x}{25\log^2x}
 <\frac{3n}{100\log^2(3n/4)}.
\]
Let \(\lambda=\log(3n/4)\).  Since
\(\log(4/3)<0.3<\Lambda/2\), we have \(\lambda>\Lambda/2\).  Hence
\[
 20\lambda^2>5\Lambda^2>3\Lambda.
\]
Together with \eqref{eq:H-lower}, this gives
\[
 P-x
 <\frac{3n}{100\lambda^2}
 <\frac{n}{5\Lambda}
 <H.
\]
Therefore \(P\in[x,x+H]\).  Moreover, \(x+H\le R-1\), since \(h\ge H+1\).
Hence the required prime exists for every target integer \(x=R-h\).  This
completes the proof.
\end{proof}

\subsection{The finite exact certificate}

\begin{proposition}
\label{prop:finite-range}
The exact-arithmetic program described in \Cref{app:certificates} certifies
both
of the following statements.
\begin{enumerate}
\renewcommand{\labelenumi}{\textup{(\alph{enumi})}}
\item For every \(229\le n\le2\,380\,428\), the prime \(R\), the integers
\(A,C,H\), and the inclusive interval in
\cref{lem:prime-interval-covering} satisfy
\eqref{eq:covering-inequality}.
\item For every \(1\le n\le228\) and every \(1\le j\le n\), there is an
integer \(0\le q\le3764\) for which \(F_n(q)=j\).
\end{enumerate}
\end{proposition}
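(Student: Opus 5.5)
This proposition is a statement about the output of a finite computation, so its proof amounts to describing an exact-arithmetic procedure whose correctness rests on elementary facts, together with the claim that running it succeeds. The plan is to split the certificate into the two parts (a) and (b). For each part we isolate exactly what must be computed, so that every quantity is an integer obtained by deterministic exact arithmetic with no floating-point steps.

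For part (a), we first sieve the primes up to a bound slightly above \(2\,380\,428\) using a deterministic sieve of Eratosthenes. Above \(2n/3\) the next prime is found below \(3n/4\) in this range, which can also be checked, so a bound such as \(2\,400\,000\) suffices. From the sieve we build the prime-counting table \(\pi\) and a table \(\nextprime(x)\) over that range; the latter comes from one backward sweep. For each \(n\) with \(229\le n\le2\,380\,428\) we then do the following:
\begin{itemize}
\item compute \(R=\nextprime(\lfloor 2n/3\rfloor+1)\), check that \(R\le n\), and set \(H=\pi(n)-\pi(R)\), \(A=R-\lfloor (n-1)/2\rfloor\) and \(C=R-H-1\);
\item verify \eqref{eq:covering-inequality}, that is, \(\max_{A\le x\le C}(\nextprime(x)-x)\le H\).
\end{itemize}
Scanning each interval naively costs \(O(n)\) per level, about \(10^{12}\) operations in total, which is too slow. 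We therefore use the fact that \(\nextprime(x)-x\le H\) for all \(x\in[A,C]\) exactly when every gap between consecutive primes meeting \([A,C]\) is at most \(H+1\). Concretely, with \(p_-\) the largest prime below \(A\) (or \(A\) itself if \(A\) is prime), every consecutive prime gap \(p_{i+1}-p_i\) with \(p_i\ge p_-\) and \(p_i\le C\) must satisfy \(p_{i+1}-p_i\le H+1\), together with the boundary check at \(A\). The required maximum gap over a window of consecutive primes is answered by a sparse table or segment tree for range-maximum queries, giving \(O(1)\) or \(O(\log)\) cost per \(n\). Alternatively one can note that \(H\) grows while the relevant prime gaps stay tiny (the largest gap below \(2.4\cdot10^6\) is well under \(200\)), so a cruder uniform bound suffices for most \(n\). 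In that approach only the small values of \(n\), where \(H\) is small, need the exact window check.

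For part (b), we run the recurrence \eqref{eq:recurrence} directly for every \(0\le q\le 3764\), incrementally in \(k\) up to \(228\). This records, for each \(n\le228\), the set \(\{F_n(q)\}\) and checks that it equals \(\{1,\ldots,n\}\); the cost is about \(3765\cdot228\) steps, which is trivial. The program also outputs a witness \(q\) for each pair \((n,j)\), so that the certificate can be re-verified independently from Lemma~\ref{lem:recurrence} alone. Because \(L_n\) vastly exceeds \(3764\) once \(n\) is moderately large, a witness \(q\le3764\) is automatically a representative in \(\{0,\ldots,L_n-1\}\) whenever \(L_n>3764\). For the small \(n\) with \(L_n\le 3764\), Proposition~\ref{prop:elementary} (periodicity) reduces any such \(q\) modulo \(L_n\).

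The main obstacle is making part (a) both efficient and trustworthy. We must ensure that the equivalence between the covering inequality and the prime-gap window condition is stated exactly, including the endpoints \(A\) and \(C\) and the vacuous case \(A>C\), and that the sieve range provably covers \(\nextprime(x)\) for all \(x\le C\). We would guard this by independently cross-checking the fast range-maximum method against the naive scan for all \(n\) up to, say, \(10^4\). We would also record the worst-case slack \(H-\max(\nextprime(x)-x)\) across the whole range as part of the certificate.
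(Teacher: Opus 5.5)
Your proposal matches the paper's certificate in substance. Part (a) is checked with a sieve, $\pi$ and $\nextprime$ tables and a range-maximum structure, and part (b) by running the strict recurrence directly for $0\le q\le 3764$ and $n\le 228$. The one small difference is that you take range maxima over consecutive prime gaps rather than over $\nextprime(x)-x$ on $[A,C]$. As you state it (full gap from $p_-$, and gaps with $p_i\le C$), that is a conservative sufficient condition rather than an exact equivalence at the endpoints. This is harmless for certification, while querying $\nextprime(x)-x$ directly, as the paper's next-prime array allows, avoids the endpoint bookkeeping altogether.
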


The program uses only the sieve of Eratosthenes, integer prime-counting and
next-prime arrays, an integer range-maximum data structure, and the strict
recurrence \eqref{eq:recurrence}.  We give its source hash, compiler command,
diagnostic output, overflow checks, and the \(228/229\) boundary audit in
\Cref{app:certificates}.  All computations use exact integer arithmetic.

\begin{proof}[Proof of \Cref{thm:surjectivity}]
For \(n\ge N_0\), the result follows from \cref{prop:analytic-covering} and
\cref{lem:prime-interval-covering}.  For \(229\le n\le N_0-1\), it follows
from part (a) of \cref{prop:finite-range} and the same covering criterion.
Finally, for \(n\le228\), part (b) gives every target.  By
\eqref{eq:period-easy}, reducing the resulting \(q\) modulo \(L_n\) does not
change \(F_n(q)\).  These three adjacent ranges contain every positive
integer \(n\), and the proof is complete.
\end{proof}

\begin{remark}
\label{rem:range-construction}
The proof gives a finite algorithm for \(q=q(n,j)\).  If \(n\le228\), scan
\(0\le q\le3764\).  If \(n\ge229\), first use reflection to reduce to the
lower half.  Then choose \(R\), \(P\), and \(t\) as in
\cref{lem:prime-interval-covering}, select any \(t\) blocking primes in
\((R,n]\), and solve the prime-power congruences in
\cref{lem:blocking-prime-interval}.  The desired \(q\) is the least
nonnegative CRT solution or its reflected partner.

For \(n\ge1\), define the deterministic \emph{cover time}
\[
 Q_n
 :=\min\left\{
 T\ge0:
 \{F_n(q):0\le q\le T\}=\{1,\ldots,n\}
 \right\}.
\]
The theorem gives the elementary bound
\[
 Q_n\le L_n-1
\]
for the first parameter range containing all labels.  We do not prove a
\(Q_n=O(n\log n)\) estimate here.
\end{remark}

\section{A Haar limit law}
\label{sec:limit}

For \(n\ge2\), let \(\widetilde Q_n\) be uniformly distributed on
\(\{0,1,\ldots,L_n-1\}\), and set
\[
 \mu_n=\Law\!\left(\frac{F_n(\widetilde Q_n)-1}{n-1}\right).
\]
We use \(W_p\) for the \(p\)-Wasserstein distance on \([0,1]\), with
\(W_\infty\) defined by the infimum of the essential supremum over all
couplings.
We work on the profinite completion
\(\widehat{\mathbb Z}\)
\citep[Section~3.2]{RibesZalesskii2010}, equipped with normalised Haar
measure.  Let \(Q\) have this distribution.  By the
quotient property of Haar measure
\citep[Section~11.1, especially Lemma~11.1.1]{LubotzkySegal2003},
\(Q\bmod L_n\) is uniform for every \(n\), and these finite residues are
automatically compatible.

Define
\begin{equation}
 S_1=1,\qquad
 S_k=S_{k-1}+\ind_{\{Q\bmod k<S_{k-1}\}},
 \label{eq:Haar-process}
\end{equation}
and, for \(n\ge2\),
\[
 X_n=\frac{S_n-1}{n-1}.
\]
By construction, \(S_n\) has the same law as
\(F_n(\widetilde Q_n)\).  Hence \(X_n\) has law \(\mu_n\).

\begin{maintheorem}[Haar limit law]
\label{thm:limit}
There is a random variable \(Y\in[0,1]\) such that
\[
 X_n\longrightarrow Y
\quad\text{almost surely and in }L^r
\quad(1\le r<\infty).
\]
For a universal constant \(C\),
\begin{equation}
 \mathbb E|X_n-Y|\le Cn^{-1/4}.                          \label{eq:L1-rate}
\end{equation}
Consequently, if \(\mu=\Law(Y)\), then
\[
 W_1(\mu_n,\mu)\le Cn^{-1/4},
\qquad
 \mu_n\Longrightarrow\mu.
\]
The measure \(\mu\) is symmetric about \(1/2\) and nondegenerate.  Moreover,
\begin{align}
 \liminf_{\varepsilon\downarrow0}
 \log(1/\varepsilon)\,\mu([0,\varepsilon])
 &\ge\frac{e^{-\gamma}}4,                               \label{eq:left-log-tail}\\
 \liminf_{\varepsilon\downarrow0}
 \log(1/\varepsilon)\,\mu([1-\varepsilon,1])
 &\ge\frac{e^{-\gamma}}4.                               \label{eq:right-log-tail}
\end{align}
\end{maintheorem}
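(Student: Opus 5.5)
The plan is to realize $X_n$ as an approximate martingale driven by $Q$, prove Cauchy-type $L^1$ estimates along blocks of consecutive levels, and then read off every remaining assertion from the rate \eqref{eq:L1-rate}.

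\emph{Heuristic.} Treat the thresholds $Q\bmod k$ as if they were independent and uniform. Then \eqref{eq:Haar-process} is a Pólya-type urn: $\mathbb P(\text{increment at }k\mid\text{past})=S_{k-1}/k$, and $S_k/(k+1)$ is a martingale. Under Haar measure this fails in an essential way. If $k$ is not a prime power, $Q\bmod k$ is determined by $Q\bmod L_{k-1}$, so the increment is deterministic given the past. Hence I will not use conditional martingale arguments. I will instead work with unconditional second moments over blocks of levels.

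\emph{Block estimate.} Fix a scale $K$ and a block length $B=\lfloor K^{1/2}\rfloor$, and consider levels $k\in(K,K+B]$. Inside the block, $S_{k-1}$ differs from $S_K$ by at most $B$.

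\begin{itemize}
\item \emph{Freezing the threshold.} Since each indicator $\ind_{\{Q\bmod k<a\}}$ is monotone in $a$, I can sandwich the block increment $S_{K+B}-S_K$ between the deterministic-threshold counts
\[
 N_K(a)=\#\{k\in(K,K+B]:Q\bmod k<a\},\qquad a=S_K\pm B .
\]
This costs an error $O(B)$ in the threshold. To handle the fact that $S_K$ is random, I take a maximum over a grid of $O(K/B)$ threshold values $a$.
\item \emph{Second moment of $N_K(a)$.} The generalized CRT identity says that $(Q\bmod i,\,Q\bmod j)$ is uniform on pairs compatible modulo $\gcd(i,j)$. From this I expect
\[
 \bigl|\Cov\bigl(\ind_{\{Q\bmod i<a\}},\ind_{\{Q\bmod j<a\}}\bigr)\bigr|\ll \frac{\gcd(i,j)}{\min(i,j)} .
\]
The mean is $\mathbb E\,N_K(a)=\sum_k \lceil a\rceil/k+O(1)$ ("short-block uniformity").
\item \emph{Summing over pairs.} Summing the covariance bound over pairs in the block, using $\sum_{i,j}\gcd(i,j)\ll B^2\log B$ together with the restriction $\gcd\le B$, should give $\Var N_K(a)\ll B+B^2\cdot\mathrm{polylog}/K$.
\end{itemize}

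Combining these, the block increment equals $B\,S_K/K$ up to an error whose $L^1$ norm is $O(B^2/K+\sqrt B)$ (plus possible logarithmic factors). This gives
\[
 \mathbb E\bigl|X_{K+B}-X_K\bigr|\ll \frac{B}{K}\Bigl(\frac BK+B^{-1/2}\Bigr).
\]
Summing over the roughly $K/B$ blocks between $K$ and $2K$ yields $\mathbb E|X_{2K}-X_K|\ll K^{-1/4}$ for the choice $B\asymp K^{1/2}$. This step is the main obstacle. It requires getting the covariance bound uniform in the threshold and in $i,j$, and handling the union over thresholds without losing more than the block variance. If the grid maximum is too costly, I will first try a chaining or dyadic-threshold argument that exploits monotonicity in $a$.

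\emph{Convergence.} Summing the geometric series $\sum_{m\ge0}(2^mn)^{-1/4}$ shows that $(X_{2^m n})_m$ is Cauchy in $L^1$. For intermediate levels, note that $|X_{n+1}-X_n|=O(1/n)$ deterministically, and the block estimate controls $X_{n'}-X_n$ for every $n'\in[n,2n]$. This gives a limit $Y$ with $\mathbb E|X_n-Y|\le Cn^{-1/4}$.
\begin{itemize}
\item \emph{Almost-sure convergence.} Apply Borel–Cantelli to $\mathbb E|X_{2^m}-Y|\ll 2^{-m/4}$, and interpolate between dyadic levels with the maximal form of the block estimate.
\item \emph{$L^r$ convergence.} This follows from $0\le X_n\le1$ and dominated convergence.
\item \emph{Wasserstein bound.} The joint law of $(X_n,Y)$ is a coupling of $\mu_n$ and $\mu$, so $W_1(\mu_n,\mu)\le\mathbb E|X_n-Y|$.
\end{itemize}

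\emph{Symmetry, nondegeneracy and boundary mass.} Since $Q\mapsto -1-Q$ preserves Haar measure, \eqref{eq:reflection-int} gives $X_n\mapsto 1-X_n$ in law, and hence $\mu$ is symmetric. For the log tails, \eqref{eq:left-log-tail} and Corollary~\ref{cor:counts} give $\mathbb P(X_n=0)=\varphi(L_n)/L_n\sim e^{-\gamma}/\log n$ by Mertens. By Markov's inequality,
\[
 \mu([0,\varepsilon])\ \ge\ \mathbb P(X_n=0)-\mathbb P(|X_n-Y|>\varepsilon)\ \ge\ \frac{e^{-\gamma}}{\log n}(1+o(1))-\frac{Cn^{-1/4}}{\varepsilon}.
\]
Choose $n=\lceil \varepsilon^{-4}\log^{8}(1/\varepsilon)\rceil$. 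Then $\log n\sim4\log(1/\varepsilon)$, and the error term is $o(1/\log(1/\varepsilon))$. This gives the constant $e^{-\gamma}/4$. The right tail follows by symmetry. Positive mass near both endpoints, together with symmetry, excludes a point mass, so $\mu$ is nondegenerate.
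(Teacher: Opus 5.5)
\paragraph{Verdict: there is a genuine gap.} Your architecture matches the paper's:
\begin{itemize}
\item freeze the threshold at the block start and sandwich by monotonicity;
\item control the block count uniformly over thresholds using the generalized-CRT covariance bound;
\item sum over roughly $K/B$ blocks per dyadic range;
\item transfer the endpoint mass $\varphi(L_n)/L_n$ by Markov's inequality.
\end{itemize}
The gap is the uniform-in-threshold step that you flag as the main obstacle. At the block length $B\asymp K^{1/2}$ you chose, it does not close.

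\paragraph{Why the proposed discrepancy bounds fall short.} Your per-block error $O(B^2/K+\sqrt B)$ needs $\mathbb E\sup_a|N_K(a)-\mathbb E N_K(a)|\ll\sqrt B$.
\begin{itemize}
\item A maximum over $O(K/B)$ grid thresholds, each with variance $\ll B$, gives only $\mathbb E\max\le\bigl(\sum\mathbb E|\cdot|^2\bigr)^{1/2}\ll\sqrt{(K/B)\,B}=\sqrt K$. For $B\asymp K^{1/2}$ this is of order $B$, the trivial bound, so the block estimate says nothing.
\item Coarsening to $r$ grid points in $t=a/K$ with monotone interpolation gives $\sqrt{rB}+B/r$, optimally $B^{2/3}$. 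With $B\asymp K^{1/2}$ this yields only $\mathbb E|X_{2K}-X_K|\ll B/K+B^{-1/3}\asymp K^{-1/6}$.
\item A straightforward $L^2$ dyadic chaining in the threshold, with the covariance bound applied to interval indicators, gives about $\sqrt B\log B$. That leads to $n^{-1/4}\log n$, which is still not the stated $Cn^{-1/4}$.
\end{itemize}

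\paragraph{The repair: rebalance the block length.} The missing adjustment is to lengthen the blocks rather than sharpen the discrepancy, which is what the paper does.
\begin{itemize}
\item It proves $\mathbb E D_B\ll h^{2/3}$ using the coarse grid $r=\lceil h^{1/3}\rceil$.
\item It proves the pathwise bound $|Y^\circ_{a+h}-Y^\circ_a|\le D_B/a+h^2/a^2$. Here $D_B$ is a supremum over all thresholds, so the random, block-correlated threshold costs nothing.
\item It takes $h=\lfloor N^{3/4}\rfloor$. Per dyadic range, the discrepancy then contributes $h^{-1/3}$ and the threshold drift contributes $h/N$, and both are $N^{-1/4}$.
\end{itemize}
This repair fits inside your framework. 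Your weaker covariance bound $\gcd(i,j)/\min(i,j)$ still gives fixed-threshold variance $\ll B$ at $B\asymp K^{3/4}$, since $B^2\log B/K\asymp K^{1/2}\log K\le B$.

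\paragraph{The remaining steps are sound.} Once the rate is established, the rest of your argument works:
\begin{itemize}
\item Borel--Cantelli plus the summed block variation gives almost-sure convergence of the full sequence;
\item boundedness in $[0,1]$ gives $L^r$ convergence;
\item the coupling gives the $W_1$ bound;
\item the choice $n=\lceil\varepsilon^{-4}\log^{8}(1/\varepsilon)\rceil$ gives the boundary estimate. It even reaches the constant $e^{-\gamma}/4$ directly, without the paper's limit $A\downarrow4$.
\end{itemize}
One small correction: the endpoint probability comes from \eqref{eq:left-endpoint}, not \eqref{eq:left-log-tail}.
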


We prove the theorem in the following steps.

\subsection{Arithmetic covariance}

For \(k\ge1\) and \(0\le t\le1\), put
\[
 U_k=\frac{Q\bmod k}{k},
 \qquad
 Z_k(t)=\ind_{\{U_k<t\}}.
\]

\begin{lemma}[Two-modulus interval covariance]
\label{lem:covariance}
For all \(j,k\ge1\), \(t\in[0,1]\), and \(d=\gcd(j,k)\),
\begin{equation}
 0\le\Cov(Z_j(t),Z_k(t))
 \le\frac{d^2}{4jk}.
 \label{eq:covariance}
\end{equation}
\end{lemma}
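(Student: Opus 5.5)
Let \(Q\) be Haar distributed, and reduce everything to a computation with independent uniform residues. By the generalized Chinese remainder theorem, \((Q\bmod j,Q\bmod k)\) is uniform on the compatible pairs \((a,b)\in\mathbb Z/j\times\mathbb Z/k\) with \(a\equiv b\pmod d\). Write \(Q\bmod d=c\), uniform on \(\{0,\ldots,d-1\}\). Conditionally on \(c\), the two residues are independent: \(Q\bmod j=c+d\alpha\) with \(\alpha\) uniform on \(\{0,\ldots,j/d-1\}\), and \(Q\bmod k=c+d\beta\) with \(\beta\) uniform on \(\{0,\ldots,k/d-1\}\). This holds because the pair \((Q\bmod j,Q\bmod k)\) is uniform on a set of size \(\lcm(j,k)=jk/d\) that fibres evenly over \(c\). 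Hence
\[
 \Cov(Z_j,Z_k)=\Cov\bigl(f(c),g(c)\bigr),
\]
where \(f(c)=\mathbb P(U_j<t\mid c)\) and \(g(c)=\mathbb P(U_k<t\mid c)\). The conditional covariance term vanishes by conditional independence.

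Next compute \(f\) explicitly. The event \(U_j<t\) means \(c+d\alpha<tj\). Therefore
\[
 f(c)=\frac{d}{j}\,\#\{\alpha\ge0:\ c+d\alpha<tj\}=\frac dj\left\lceil\frac{tj-c}{d}\right\rceil^{+}.
\]
This is capped at \(j/d\), which matters only when \(t=1\), where everything is constant. Writing \(tj/d=m+\theta\) with \(\theta\in[0,1)\), we get \(f(c)=\frac dj\bigl(m+\ind_{\{c<d\theta\}}\bigr)\). So \(f\) is a nonincreasing function of \(c\) that takes only two values, differing by \(d/j\). The same holds for \(g\), with step \(d/k\).

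Both \(f\) and \(g\) are nonincreasing functions of the single variable \(c\), so Chebyshev's association (rearrangement) inequality gives \(\Cov(f(c),g(c))\ge0\). For the upper bound, write
\[
 f=\frac dj\bigl(m+\ind_A\bigr),\qquad g=\frac dk\bigl(m'+\ind_B\bigr),
\]
where \(A\) and \(B\) are initial segments of \(\{0,\ldots,d-1\}\). Then
\[
 \Cov(f,g)=\frac{d^2}{jk}\Cov(\ind_A,\ind_B).
\]
Take \(A\subseteq B\) without loss of generality, with \(a=\mathbb P(A)\) and \(b=\mathbb P(B)\). Then \(\Cov(\ind_A,\ind_B)=a(1-b)\le a(1-a)\le\frac14\). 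This yields \eqref{eq:covariance}. The boundary cases \(t=0\) and \(t=1\) are trivial, since the indicators are then constant.

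The only step needing care is the first one: justifying the conditional independence and uniform fibre structure from the compatible-residue description \eqref{eq:crt-compat} and the uniformity of \(Q\bmod\lcm(j,k)\). The rest is the explicit floor/ceiling count, where the main hazard is the off-by-one in the strict inequality \(U_j<t\). I will handle it by always counting \(\alpha\) with \(c+d\alpha<tj\), and I expect no genuine obstacle there.
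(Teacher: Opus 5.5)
Your proof is correct and is essentially the paper's argument: the paper also splits the compatible pairs by the common residue $c$ modulo $d$, counts $A+\ind_{\{c<u\}}$ and $B+\ind_{\{c<v\}}$ admissible residues in each class, and obtains the excess term $\frac{d\min(u,v)-uv}{jk}$, which is exactly your $\frac{d^2}{jk}\Cov(\ind_A,\ind_B)$ for nested initial segments of $\{0,\ldots,d-1\}$. Your conditional-independence framing via the law of total covariance presents the same sum more conceptually. The appeal to Chebyshev's inequality is redundant, because $a(1-b)\ge0$ already gives the lower bound, just as $u(d-v)\ge0$ does in the paper.
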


\begin{proof}
We first put
\[
 a_j=\#\{0\le r<j:r/j<t\}
\]
and define \(a_k\) analogously.  Thus \(a_j=\lceil jt\rceil\), with the
evident endpoint convention at \(t=0\).  We now count the compatible
residue pairs.  A pair
\((r,s)\in\mathbb Z/j\mathbb Z\times\mathbb Z/k\mathbb Z\) is induced by
one profinite integer \(Q\) exactly when
\[
 r\equiv s\pmod d.
\]
The \(jk/d\) compatible pairs are equiprobable.

Write
\[
 a_j=dA+u,\qquad a_k=dB+v,\qquad 0\le u,v<d.
\]
Among \(0,\ldots,a_j-1\), the number congruent to \(c\bmod d\) is
\(A+\ind_{\{c<u\}}\); the corresponding count for \(a_k\) is
\(B+\ind_{\{c<v\}}\).  Generalized CRT counting gives
\[
\begin{aligned}
 \mathbb P(Z_j(t)=Z_k(t)=1)
 &=
 \frac d{jk}\sum_{c=0}^{d-1}
 \bigl(A+\ind_{\{c<u\}}\bigr)
 \bigl(B+\ind_{\{c<v\}}\bigr)\\
 &=
 \frac{a_ja_k}{jk}
 +\frac{d\min(u,v)-uv}{jk}.
\end{aligned}
\]
The first term is the product of the marginals.  Suppose \(u\le v\); the
other case is symmetric.  The remaining numerator is then \(u(d-v)\).
It is nonnegative and at most \(u(d-u)\le d^2/4\).  This proves the
lemma.
\end{proof}

\subsection{Uniform discrepancy on a short block}

Let \(a,h\) be positive integers with \(1\le h\le a\), and put
\[
 B=(a,a+h]\cap\mathbb Z.
\]
Define
\[
 W_B(t)=\sum_{k\in B}\bigl(Z_k(t)-t\bigr),
 \qquad
 D_B=\sup_{0\le t\le1}|W_B(t)|.
\]
We call \(D_B\) the \emph{block discrepancy} of \(B\).
For each outcome, the jump points of \(W_B(t)\) are among the finitely many
numbers \(U_k=(Q\bmod k)/k\), \(k\in B\), and are therefore rational.
Between consecutive jump points, \(W_B(t)\) is affine.  Its one-sided
limiting values at the endpoints are approached by rational points from
within the corresponding interval, while its actual value at each jump point
is included because that point is rational.  Therefore,
\[
 D_B=\sup_{t\in\mathbb Q\cap[0,1]}|W_B(t)|.
\]
Thus \(D_B\) is a countable supremum of measurable random variables and is
itself measurable; also \(0\le D_B\le h\).

\begin{lemma}[Block discrepancy]
\label{lem:block-discrepancy}
There is a universal constant \(C_0\) such that
\begin{equation}
 \mathbb E D_B\le C_0h^{2/3}.
 \label{eq:block-discrepancy}
\end{equation}
\end{lemma}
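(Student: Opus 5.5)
We bound \(\mathbb E D_B\) by combining a second-moment estimate at finitely many fixed levels \(t\) with a deterministic monotonicity argument between those levels; this is the usual discretization argument for the Kolmogorov discrepancy.

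\emph{Step 1: variance at a fixed level.} For fixed \(t\), write \(W_B(t)=\sum_{k\in B}(Z_k(t)-\mathbb E Z_k(t))+\sum_{k\in B}(\mathbb E Z_k(t)-t)\). Since \(\mathbb E Z_k(t)=\lceil kt\rceil/k\), each bias term lies in \([0,1/k)\). Because \(k>a\ge h\), the total bias is at most \(h/a\le1\). For the centred part, \cref{lem:covariance} gives
\[
 \operatorname{Var}\Bigl(\sum_{k\in B}Z_k(t)\Bigr)\le\sum_{j,k\in B}\frac{\gcd(j,k)^2}{4jk}.
\]
For \(j\ne k\) in \(B\), \(d=\gcd(j,k)\) divides \(|j-k|<h\), and \(jk>a^2\ge h^2\). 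We split the sum by the common divisor \(d\). For each \(d\), the number of pairs in \(B\) that are both divisible by \(d\) is at most \((h/d+1)^2\). Hence the off-diagonal contribution is at most
\[
 \sum_{d<h}\frac{d^2}{4a^2}\Bigl(\frac hd+1\Bigr)^2\ll\frac{h^3}{a^2}\le h.
\]
The diagonal terms contribute at most \(h/4\). So \(\operatorname{Var}W_B(t)\ll h\), and therefore \(\mathbb E|W_B(t)|\ll h^{1/2}+1\), uniformly in \(t\).

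\emph{Step 2: chaining by monotonicity.} Fix a mesh \(M\) and grid points \(t_i=i/M\), \(0\le i\le M\). On \([t_i,t_{i+1}]\) the counting function \(\sum_{k\in B}Z_k(t)\) is nondecreasing and \(ht\) is linear. Hence, for \(t\) in this interval,
\[
 |W_B(t)|\le\max\bigl(|W_B(t_i)|,|W_B(t_{i+1})|\bigr)+h/M.
\]
It follows that \(D_B\le\max_i|W_B(t_i)|+h/M\). We bound the maximum crudely by the \(\ell^2\) sum:
\[
 \mathbb E\max_i|W_B(t_i)|\le\Bigl(\sum_{i}\mathbb E W_B(t_i)^2\Bigr)^{1/2}\ll(Mh)^{1/2}.
\]
This gives \(\mathbb E D_B\ll (Mh)^{1/2}+h/M\). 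Choosing \(M\asymp h^{1/3}\) balances the two terms and yields \(\mathbb E D_B\ll h^{2/3}\), which is exactly the exponent in \eqref{eq:block-discrepancy}. The constant is universal because every estimate above is uniform in \(a\), \(h\) and \(t\).

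\emph{Main obstacle.} The delicate point is the off-diagonal gcd sum in Step 1. Its \(O(h)\) bound relies on \(B\) being short relative to its location (\(h\le a\)), so that \(\gcd(j,k)\le|j-k|\) is small compared with \(\sqrt{jk}\). If the divisor-counting bound turned out too lossy, the fallback is the cruder estimate \(\gcd(j,k)^2/(jk)\le h^2/a^2\) for each pair, which costs an extra factor in the variance. Otherwise the argument needs only care with the endpoint convention at \(t=0\) and the bias terms.
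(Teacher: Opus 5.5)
Your proposal is correct and follows the paper's proof essentially step for step: a fixed-$t$ second-moment bound $\mathbb E|W_B(t)|^2\ll h$ from \cref{lem:covariance} plus the bias term, then a grid of mesh $\asymp h^{1/3}$ with monotonicity and the $\ell^2$ bound on the maximum. The only cosmetic difference is the off-diagonal gcd sum, where you group pairs by their exact gcd $d<h$ and count at most $(h/d+1)^2$ of them, whereas the paper sums over gaps $r$ using $\gcd(j,r)^2\le\sum_{d\mid j,\ d\mid r}d^2$ and the divisor sums $\sigma_1,\sigma_2$; both routes give $\ll h^3/a^2\le h$.
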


\begin{proof}
We first fix \(t\).  Since
\[
 0\le\mathbb E Z_k(t)-t\le\frac1k,
\]
the bias of \(W_B(t)\) is at most \(h/a\le1\).

Next, consider a positive gap \(r<h\) and \(j,j+r\in B\).  We have
\[
 \gcd(j,j+r)=\gcd(j,r).
\]
The divisor bound
\[
 \gcd(j,r)^2
 \le\sum_{\substack{d\mid r\\d\mid j}}d^2
\]
implies, for one orientation \(j\mapsto j+r\),
\[
 \sum_{\substack{j,j+r\in B}}\gcd(j,j+r)^2
 \le\sum_{d\mid r}d^2\left(\frac hd+1\right)
 =h\sigma_1(r)+\sigma_2(r).
\]
Summing over both orientations and all gaps gives
\[
\begin{aligned}
 \sum_{\substack{j,k\in B\\j\ne k}}\gcd(j,k)^2
 &\le
 2\sum_{1\le r<h}\bigl(h\sigma_1(r)+\sigma_2(r)\bigr)\\
 &\ll h^3.
\end{aligned}
\]
To estimate the right-hand side, note that
\[
 \sum_{r\le h}\sigma_1(r)
 =\sum_{d\le h}d\left\lfloor\frac hd\right\rfloor
 \ll h^2,
\]
and, similarly, \(\sum_{r\le h}\sigma_2(r)\ll h^3\).
We now apply \cref{lem:covariance}.  The off-diagonal covariance
contribution is
\[
 \ll\frac{h^3}{a^2}\ll h,
\]
and the diagonal contribution is \(O(h)\).  Including the bounded bias gives
\begin{equation}
 \mathbb E|W_B(t)|^2\ll h
 \label{eq:fixed-t-L2}
\end{equation}
uniformly in \(a,h,t\).

Take the grid \(t_i=i/r\), \(0\le i\le r\).  If
\(t_i\le t\le t_{i+1}\), monotonicity of
\(\sum_{k\in B}Z_k(t)\) gives
\[
 W_B(t_i)-\frac hr
 \le W_B(t)
 \le W_B(t_{i+1})+\frac hr.
\]
Therefore
\[
 D_B\le\max_{0\le i\le r}|W_B(t_i)|+\frac hr.
\]
Using \eqref{eq:fixed-t-L2},
\[
 \mathbb E D_B
 \le
 \left(\sum_{i=0}^r\mathbb E|W_B(t_i)|^2\right)^{1/2}
 +\frac hr
 \ll\sqrt{rh}+\frac hr.
\]
Choosing \(r=\lceil h^{1/3}\rceil\) proves
\eqref{eq:block-discrepancy} and completes the proof.
\end{proof}

This estimate controls the arithmetic dependence among noncoprime
moduli.  In particular, we do not assume conditional uniformity.

\subsection{Pathwise control at a random threshold}

We use the auxiliary normalisation
\[
 Y_n^{\circ}=\frac{S_n}{n}.
\]
The two normalisations satisfy
\begin{equation}
 0\le Y_n^{\circ}-X_n
 =\frac{n-S_n}{n(n-1)}
 \le\frac1n.                                            \label{eq:normalization-gap}
\end{equation}
Write
\[
 I_k=\ind_{\{Q\bmod k<S_{k-1}\}}.
\]
Then
\begin{align}
 Y_k^{\circ}-Y_{k-1}^{\circ}
 &=\frac{I_k-Y_{k-1}^{\circ}}{k},                       \label{eq:one-step-Y}\\
 I_k&=Z_k(T_k),\qquad
 T_k=\frac{S_{k-1}}k=\frac{k-1}{k}Y_{k-1}^{\circ}.       \label{eq:adaptive-threshold}
\end{align}
We call \(T_k\) the \emph{state-dependent threshold} at level \(k\).

\begin{lemma}[Block increment bound]
\label{lem:adaptive-block}
Let \(1\le h\le a\) and \(B=(a,a+h]\cap\mathbb Z\).  Then
\begin{equation}
 |Y_{a+h}^{\circ}-Y_a^{\circ}|
 \le
 \frac{D_B}{a}+\frac{h^2}{a^2}.
 \label{eq:adaptive-block}
\end{equation}
\end{lemma}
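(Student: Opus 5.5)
We telescope the one-step identity \eqref{eq:one-step-Y} over the block and compare the state-dependent threshold \(T_k\) with a single frozen threshold. Summing \eqref{eq:one-step-Y} for \(k\in B\) gives
\[
 Y_{a+h}^{\circ}-Y_a^{\circ}
 =\sum_{k\in B}\frac{I_k-Y_{k-1}^{\circ}}{k}.
\]
We freeze the threshold at \(t_0=Y_a^{\circ}\). The first step is to control how far the state can drift inside the block. Since \(|I_k-Y_{k-1}^\circ|\le1\), each increment of \(Y^\circ\) has size at most \(1/k\le1/a\). Hence \(|Y_{k-1}^{\circ}-t_0|\le (k-1-a)/a\le h/a\) for \(k\in B\). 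In addition,
\[
 |T_k-Y_{k-1}^\circ|=\frac{Y_{k-1}^\circ}{k}\le\frac1a.
\]

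The key step is to replace the adaptive indicators \(Z_k(T_k)\) by \(Z_k(t_0)\) using monotonicity in \(t\), rather than a crude count. I would argue directly with the integer states instead. We have \(S_{a+h}-S_a=\sum_{k\in B}I_k\), and \(S_{k-1}\) lies between \(S_a\) and \(S_a+h\). Hence \(I_k\) is sandwiched:
\[
 Z_k(S_a/k)\le I_k\le Z_k((S_a+h)/k).
\]
The thresholds \(S_a/k\) and \((S_a+h)/k\) still vary with \(k\), so we compare them with the fixed levels \(t_-=S_a/(a+h)\) and \(t_+=(S_a+h)/a\), truncated to \([0,1]\). Monotonicity of \(Z_k\) in \(t\) gives
\[
 \sum_{k\in B}Z_k(t_-)\le S_{a+h}-S_a\le\sum_{k\in B}Z_k(t_+).
\]
By the definition of \(D_B\), this yields \(ht_--D_B\le S_{a+h}-S_a\le ht_++D_B\).

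It remains to convert to \(Y^\circ\). We write
\[
 Y^\circ_{a+h}-Y^\circ_a=\frac{S_{a+h}-S_a}{a+h}-\frac{hS_a}{a(a+h)}.
\]
The term \(D_B/(a+h)\le D_B/a\) is already of the required form. The deterministic parts are
\[
 \frac{h(t_\pm-S_a/a)}{a+h}.
\]
Now \(t_+-S_a/a=h/a\) and \(|t_--S_a/a|=S_ah/(a(a+h))\le h/a\), and truncation only moves \(t_\pm\) toward \(S_a/a\in[0,1]\). So these terms are at most \(h^2/(a(a+h))\le h^2/a^2\), which gives \eqref{eq:adaptive-block}.

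The main obstacle is this sandwiching step. The thresholds \(S_{k-1}/k\) are random, vary with \(k\), and depend on earlier residues, so \(D_B\) cannot be applied to them directly. The resolution is that the count of increments in the block is monotone in the threshold. The remaining care is bookkeeping: checking that the truncation of \(t_\pm\) to \([0,1]\) and the bias \(\mathbb E Z_k(t)-t\) never enter a pathwise bound, and that no factor \((1+h/a)\) is lost. The latter holds because \(h\le a\) and we divide by \(a+h\ge a\).
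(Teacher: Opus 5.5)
Your proof is correct and is essentially the paper's argument: bracket every state-dependent threshold \(T_k=S_{k-1}/k\), \(k\in B\), between two fixed block-level thresholds. Then sum the pathwise sandwich using that \(D_B\) is a supremum over all \(t\in[0,1]\), and convert with the exact identity for \(Y_{a+h}^{\circ}-Y_a^{\circ}\). Your upper threshold \((S_a+h)/a\) coincides with the paper's \(Y_a^{\circ}+h/a\), while your lower threshold \(S_a/(a+h)\) is slightly sharper than the paper's \(Y_a^{\circ}-h/a\). The opening telescoping and drift estimates are never used and can be dropped.
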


\begin{proof}
For \(k\in B\), write \(S_{k-1}=S_a+R_k\), where
\(0\le R_k\le k-1-a\).  Then
\[
 T_k-Y_a^{\circ}
 =\frac{R_k-(k-a)Y_a^{\circ}}{k},
\]
and hence
\[
 |T_k-Y_a^{\circ}|\le\delta,\qquad \delta=\frac ha.
\]
Put
\[
 t_-=\max(0,Y_a^{\circ}-\delta),\qquad
 t_+=\min(1,Y_a^{\circ}+\delta).
\]
By monotonicity in the threshold, we obtain the pathwise sandwich
\[
 Z_k(t_-)\le I_k\le Z_k(t_+).
\]
The endpoints \(t_\pm\) are random and correlated with the block.
Nevertheless, \(D_B\) is a pathwise supremum over \emph{all} thresholds.
We may therefore sum the sandwich.  Using
\(|t_\pm-Y_a^{\circ}|\le\delta\), we get
\[
 \left|\sum_{k\in B}(I_k-Y_a^{\circ})\right|
 \le D_B+h\delta.
\]
Finally,
\[
 Y_{a+h}^{\circ}-Y_a^{\circ}
 =
 \frac{\sum_{k\in B}I_k-hY_a^{\circ}}{a+h}.
\]
Substituting \(\delta=h/a\) and \(a+h\ge a\) proves
\eqref{eq:adaptive-block}.  This completes the proof.
\end{proof}

\subsection{Dyadic summability}

Fix a dyadic integer \(N=2^m\) and put
\[
 h=\lfloor N^{3/4}\rfloor,\qquad
 M=\left\lceil\frac Nh\right\rceil,\qquad
 e_j=\min(N+jh,2N)\quad(0\le j\le M).
\]
Let \(\ell_j=e_j-e_{j-1}\), and define the variation over the block
endpoints by
\[
 V_N=\sum_{j=1}^M
 |Y_{e_j}^{\circ}-Y_{e_{j-1}}^{\circ}|.
\]
Within each dyadic block, the endpoints are distinct.  When we combine
successive dyadic blocks, we list their common boundary only once.
Since \(M\le2N/h\), \cref{lem:block-discrepancy} and
\cref{lem:adaptive-block} give
\begin{align}
 \mathbb E V_N
 &\ll
 \sum_{j=1}^M
 \left(\frac{\ell_j^{2/3}}N+\frac{\ell_j^2}{N^2}\right)\notag\\
 &\ll
 \frac Nh\left(\frac{h^{2/3}}N+\frac{h^2}{N^2}\right)\notag\\
 &\ll h^{-1/3}+\frac hN
 \ll N^{-1/4}.                                          \label{eq:VN}
\end{align}
Hence
\[
 \sum_{m=1}^{\infty}\mathbb E V_{2^m}<\infty.
\]
By Tonelli's theorem,
\begin{equation}
 \sum_{m=1}^{\infty}V_{2^m}<\infty
 \quad\text{almost surely}.                             \label{eq:variation-summable}
\end{equation}
On this full-measure event, list the union of all dyadic block endpoints
in strictly increasing order.  The total variation of this sequence is at most
\(\sum_{m\ge1}V_{2^m}\), because a boundary shared by adjacent blocks is
counted only once.  Hence the endpoint sequence converges.  Denote its limit by
\(Y\), and define \(Y=0\) on the null complement.

Every \(n\in[N,2N]\) lies within \(h\) of a preceding block endpoint \(a\).
By \eqref{eq:one-step-Y},
\[
 |Y_n^{\circ}-Y_a^{\circ}|
 \le\frac hN
 \ll N^{-1/4}.
\]
Together with \eqref{eq:variation-summable}, this deterministic
within-block estimate shows that the \emph{full} sequence
\(Y_n^{\circ}\) converges almost surely to \(Y\).  In particular, the
argument is not restricted to a dyadic subsequence.  More precisely, for a
dyadic \(N\) and \(N\le n\le2N\), the variation from the preceding block
endpoint \(a\) to \(2N\) is at most \(V_N\), while the variation from \(2N\)
through all later dyadic block endpoints to \(Y\) is at most
\(\sum_{r=1}^{\infty}V_{2^rN}\).  Therefore,
\[
 |Y_n^{\circ}-Y|
 \le\frac hN+V_N+\sum_{r=1}^{\infty}V_{2^rN}.
\]
Taking expectations and applying \eqref{eq:VN} at each scale gives
\[
 \mathbb E|Y_n^{\circ}-Y|\ll n^{-1/4}.
\]
Combining this estimate with \eqref{eq:normalization-gap} proves
\eqref{eq:L1-rate}.  Since all variables lie in \([0,1]\), almost-sure and
\(L^1\) convergence imply \(L^r\) convergence for every finite \(r\).
Finally, the displayed coupling gives the claimed \(W_1\) bound.

\subsection{Symmetry, nondegeneracy, and boundary mass}

The map \(Q\mapsto-1-Q\) preserves Haar measure.  By
\eqref{eq:reflection-int},
\[
 X_n(-1-Q)=1-X_n(Q).
\]
We intersect the full-measure convergence set with its reflected image and
pass to the limit.  This gives
\[
 Y(-1-Q)=1-Y(Q)
\quad\text{almost surely}.
\]
Hence \(\mu\) is symmetric about \(1/2\), and \(\mathbb EY=1/2\).

We next prove nondegeneracy.  Let
\[
 A_N=\{S_N=1\}.
\]
By \eqref{eq:left-endpoint} and the prime factorisation of \(L_N\),
\begin{equation}
 \mathbb P(A_N)
 =\frac{\varphi(L_N)}{L_N}
 =\prod_{p\le N}\left(1-\frac1p\right)
 \sim\frac{e^{-\gamma}}{\log N},                         \label{eq:Mertens}
\end{equation}
where the last asymptotic is Mertens' product theorem
\citep[Theorem~2.7(e)]{MontgomeryVaughan2007}.  On \(A_N\),
\(Y_N^{\circ}=1/N\).  Markov's inequality and the all-\(n\) coupling rate
give, for \(N\ge8\),
\[
\begin{aligned}
 \mathbb P(Y<1/4)
 &\ge
 \mathbb P(A_N)
 -\mathbb P\!\left(
 |Y-Y_N^{\circ}|\ge\frac14-\frac1N
 \right)\\
 &\ge
 \frac{\varphi(L_N)}{L_N}-C_1N^{-1/4}.
\end{aligned}
\]
Since the first term is of order \(1/\log N\) and eventually dominates the
second, \(\mathbb P(Y<1/4)>0\); by reflection,
\(\mathbb P(Y>3/4)>0\).

It remains to prove the boundary-mass estimates.  We transfer the endpoint
event at a scale depending on the target interval.  Fix \(A>4\), let
\(\varepsilon\downarrow0\), and put
\[
 N=\lceil\varepsilon^{-A}\rceil.
\]
On \(A_N\), the exactly symmetric normalisation has \(X_N=0\).  No
independence assumption is needed: we have
\[
\begin{aligned}
 \mu([0,\varepsilon])
 &\ge
 \mathbb P(A_N)
 -\mathbb P(A_N\cap\{Y>\varepsilon\})\\
 &\ge
 \frac{\varphi(L_N)}{L_N}
 -\frac1{\varepsilon}\mathbb E|Y-X_N|.
\end{aligned}
\]
Since
\[
 \log N\sim A\log(1/\varepsilon),
 \qquad
 \frac{N^{-1/4}}{\varepsilon}
 =O\!\left(\varepsilon^{A/4-1}\right),
\]
\eqref{eq:Mertens} yields
\[
 \liminf_{\varepsilon\downarrow0}
 \log(1/\varepsilon)\,\mu([0,\varepsilon])
 \ge\frac{e^{-\gamma}}A.
\]
Now let \(A\downarrow4\).  This proves \eqref{eq:left-log-tail}, and
reflection proves \eqref{eq:right-log-tail}.  This completes the proof of
\cref{thm:limit}.

\begin{corollary}
\label{cor:no-beta}
The limiting measure \(\mu\) is not
\(\operatorname{Beta}(\alpha,\alpha)\) for any \(\alpha>0\).  More generally,
neither endpoint admits an upper bound \(O(\varepsilon^\alpha)\) with
\(\alpha>0\).  Also,
\[
 \mathbb E Y^{-s}=\mathbb E(1-Y)^{-s}=\infty
 \qquad(s>0).
\]
Here negative powers are extended-real valued, with
\(0^{-s}:=+\infty\).
If \(\mu\) is absolutely continuous with density \(f\), then
\(f\notin L^p([0,1])\) for every \(p>1\).
\end{corollary}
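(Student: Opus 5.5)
All parts of the corollary will be derived from the logarithmic boundary-mass bounds \eqref{eq:left-log-tail} and \eqref{eq:right-log-tail} of \cref{thm:limit}, together with the symmetry of \(\mu\). Both bounds give a constant \(c=e^{-\gamma}/8>0\) and an \(\varepsilon_0\in(0,1)\) such that
\[
 \mu([0,\varepsilon])\ge\frac{c}{\log(1/\varepsilon)},\qquad
 \mu([1-\varepsilon,1])\ge\frac{c}{\log(1/\varepsilon)}
 \qquad(0<\varepsilon<\varepsilon_0).
\]
By symmetry it is enough to treat the left endpoint. The main observation is that \(c/\log(1/\varepsilon)\) decays more slowly than any power of \(\varepsilon\). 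Indeed, \(\varepsilon^\alpha\log(1/\varepsilon)\to0\) as \(\varepsilon\downarrow0\) for every \(\alpha>0\).

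For the power-law statement, I will argue by contradiction. Suppose \(\mu([0,\varepsilon])\le K\varepsilon^\alpha\) for all small \(\varepsilon\). Then \(c\le K\varepsilon^\alpha\log(1/\varepsilon)\to0\), which contradicts \(c>0\). The beta exclusion is a special case. If \(\mu=\operatorname{Beta}(\alpha,\alpha)\), the density is bounded by a constant times \(y^{\alpha-1}\) near \(0\), so \(\mu([0,\varepsilon])=O(\varepsilon^\alpha)\), which has just been ruled out. This bound uses the exact beta density near \(0\): the factor \((1-y)^{\alpha-1}\) is bounded on \([0,1/2]\).

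For the negative moments, let \(s>0\). On the event \(\{Y\le\varepsilon\}\) we have \(Y^{-s}\ge\varepsilon^{-s}\), and this also holds when \(Y=0\) under the convention \(0^{-s}=+\infty\). Hence
\[
 \mathbb E Y^{-s}\ge\varepsilon^{-s}\mu([0,\varepsilon])\ge\frac{c\,\varepsilon^{-s}}{\log(1/\varepsilon)}\longrightarrow\infty .
\]
The same argument at the right endpoint, or symmetry directly, gives \(\mathbb E(1-Y)^{-s}=\infty\).

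For the density claim, suppose \(\mu\) has a density \(f\in L^p\) with \(p>1\), and let \(p'\) be the conjugate exponent. Hölder's inequality on \([0,\varepsilon]\) gives
\[
 \mu([0,\varepsilon])\le\|f\|_p\,\varepsilon^{1/p'}.
\]
This is an \(O(\varepsilon^\alpha)\) bound with \(\alpha=1-1/p>0\), which contradicts the power-law part. I expect no real obstacle. The only points needing care are the extended-real convention at \(Y=0\) and checking that the beta density bound near \(0\) holds for every \(\alpha>0\), including \(\alpha<1\).
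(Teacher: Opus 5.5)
Your proposal is correct and follows essentially the same route as the paper. Every claim is read off from the logarithmic boundary-mass bound \eqref{eq:left-log-tail} and reflection, using the beta tail estimate, the inequality \(\mathbb E Y^{-s}\ge\varepsilon^{-s}\mu([0,\varepsilon])\), and H\"older's inequality on \([0,\varepsilon]\). The only cosmetic difference is that you first prove the general no-power-law statement and derive the beta and \(L^p\) claims from it, whereas the paper contradicts \eqref{eq:left-log-tail} directly in each case.
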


\begin{proof}
A \(\operatorname{Beta}(\alpha,\alpha)\) distribution has left tail
\(O(\varepsilon^\alpha)\), contradicting
\eqref{eq:left-log-tail}.  Next, for every \(\varepsilon>0\),
\[
 \mathbb E Y^{-s}
 \ge\varepsilon^{-s}\mu([0,\varepsilon]).
\]
Letting \(\varepsilon\downarrow0\), we see that the right-hand side is
unbounded.  Hence \(\mathbb EY^{-s}=\infty\), and reflection gives the
other endpoint.  Finally, suppose \(f\in L^p\) for some \(p>1\).
H{\"o}lder's inequality would give
\[
 \mu([0,\varepsilon])
 \le\|f\|_p\varepsilon^{1-1/p},
\]
which again contradicts \eqref{eq:left-log-tail}.  This completes the
proof.
\end{proof}

\begin{proposition}
\label{prop:prime-power-jump}
For \(n\ge3\),
\[
 W_\infty(\mu_n,\mu_{n-1})\le\frac1{n-1}.
\]
In particular, for every prime power \(p^a\ge3\), the laws at
\(p^a-1,p^a,p^a+1\) have pairwise \(W_\infty\)-distance
\(O(p^{-a})\).
\end{proposition}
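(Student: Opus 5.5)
We use the Haar coupling of \Cref{sec:limit}: on one probability space, with \(Q\) Haar-distributed on \(\widehat{\mathbb Z}\), the variables \(X_{n-1}\) and \(X_n\) have laws \(\mu_{n-1}\) and \(\mu_n\). It therefore suffices to prove the pathwise bound
\[
 |X_n-X_{n-1}|\le\frac1{n-1}
\]
for every outcome. Since \(W_\infty\) is an infimum over couplings of the essential supremum, this coupling then gives \(W_\infty(\mu_n,\mu_{n-1})\le 1/(n-1)\).

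For the pathwise bound, write \(s=S_{n-1}\in\{1,\ldots,n-1\}\) and \(I=I_n\in\{0,1\}\). By \eqref{eq:Haar-process}, \(S_n=s+I\). For \(n\ge3\) both normalisations are defined, and
\[
 X_n-X_{n-1}
 =\frac{s-1+I}{n-1}-\frac{s-1}{n-2}
 =\frac{I}{n-1}-\frac{s-1}{(n-1)(n-2)}.
\]
Since \(0\le (s-1)/(n-2)\le1\), this equals \((I-\theta)/(n-1)\) with \(\theta\in[0,1]\). Hence \(|I-\theta|\le1\), and the bound follows. This algebra is routine. The only care needed is the restriction \(n\ge3\), so that \(X_{n-1}\) is defined with \(n-2\ge1\).

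For the prime-power statement, apply the first part at levels \(n=p^a\) and \(n=p^a+1\). Both satisfy \(n\ge3\) because \(p^a\ge3\). This gives
\[
 W_\infty(\mu_{p^a},\mu_{p^a-1})\le\frac1{p^a-1},
 \qquad
 W_\infty(\mu_{p^a+1},\mu_{p^a})\le\frac1{p^a}.
\]
The triangle inequality for \(W_\infty\) then bounds the remaining pair by \(1/(p^a-1)+1/p^a\). Since \(p^a\ge3\), we have \(1/(p^a-1)\le\tfrac32 p^{-a}\). So all three pairwise distances are \(O(p^{-a})\).

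The intended point of the proposition is that a prime-power level \(p^a\), where \(L_n\) jumps by a factor \(p\), does not cause a large jump in the law. No step poses a real obstacle. The only subtle point is that \(W_\infty\) is a coupling infimum, so a pathwise bound in a single coupling suffices. One could also couple directly by reducing a uniform class modulo \(L_n\) to one modulo \(L_{n-1}\), but the Haar coupling already provides this.
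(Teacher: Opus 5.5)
Your proof is correct and follows essentially the same route as the paper: the common Haar coupling, the pathwise identity $X_n-X_{n-1}=(I_n-X_{n-1})/(n-1)$ with both $I_n$ and $X_{n-1}$ in $[0,1]$, and then the bound at levels $p^a$ and $p^a+1$ combined by the triangle inequality. You also spell out the requirement $n\ge3$ and the explicit constant $3/2$, which the paper leaves implicit.
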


\begin{proof}
Under the common Haar coupling, write \(S_n=S_{n-1}+I_n\) and
\[
 x=\frac{S_{n-1}-1}{n-2}.
\]
A direct calculation gives
\[
 X_n-X_{n-1}=\frac{I_n-x}{n-1}.
\]
Since \(I_n,x\in[0,1]\), this proves the asserted pathwise bound.
For \(N=p^a\ge3\), we apply the bound at \(n=N\) and \(n=N+1\).
The triangle inequality gives the remaining distance between
\(\mu_{N-1}\) and \(\mu_{N+1}\).  This completes the proof.
\end{proof}

\subsection{What remains unidentified}

The reverse-insertion interpretation gives the canonical representation
\[
 Y=\lim_{n\to\infty}\frac1{n-1}\sum_{k=2}^n I_k
 \quad\text{almost surely}.
\]
We also obtain convergence of all positive integer moments:
\[
 \mathbb E X_n^r\longrightarrow\mathbb EY^r
 \qquad(r=1,2,\ldots).
\]
The zeroth moments of the probability measures \(\mu_n\) and \(\mu\) are
identically \(1\).  For \(r\ge1\), combine the inequality
\(|x^r-y^r|\le r|x-y|\) on \([0,1]\) with
\eqref{eq:L1-rate} to obtain the displayed convergence.
Since compactly supported measures are moment-determinate
\citep[Corollary~4.2]{Schmudgen2017}, it would suffice to evaluate the
limiting even correlations in order to identify \(\mu\).  The odd centred
moments vanish by reflection.  We do not evaluate the even correlations in
closed form.

The proof also does not establish non-atomicity.  We now state the precise
finite-level obstruction.  For any probability measure \(\nu\) on \([0,1]\)
and \(r\ge0\), define its concentration function using the radius convention
\[
 \mathcal Q_\nu(r)
 =\sup_{x\in[0,1]}
 \nu([x-r,x+r]\cap[0,1]),
\]
and, for \(n\ge2\), abbreviate
\(\mathcal Q_n(r)=\mathcal Q_{\mu_n}(r)\).
The rate \eqref{eq:L1-rate} shows that \(\mu\) is non-atomic if and only if
\[
 \mathcal Q_n(n^{-1/8})\longrightarrow0.
\]
To prove this equivalence, first note that
\[
 \mu(\{x\})\le\mathcal Q_n(n^{-1/8})+Cn^{-1/8},
\]
whereas, if \(\mu\) is non-atomic, its concentration function tends
uniformly to zero on the compact interval and
\[
 \mathcal Q_n(n^{-1/8})
 \le\mathcal Q_\mu(2n^{-1/8})+Cn^{-1/8}.
\]
Hence explicit identification, non-atomicity, a matching endpoint upper
bound, and the existence of a density remain open.

\section{Endpoint dominance: an open problem}
\label{sec:endpoint}

By \eqref{eq:endpoint-counts}, each endpoint occurs
\(\varphi(L_n)\) times over one period.  We ask whether an internal survivor
can occur at least as often.

\begin{problem}[Endpoint dominance]
\label{prob:endpoint-dominance}
For every \(n\ge4\) and every \(1<j<n\), is it true that
\begin{equation}
 N_{n,j}<\varphi(L_n)?
 \label{eq:endpoint-conjecture}
\end{equation}
Equivalently, are the two endpoints \(1\) and \(n\) the unique global maxima
of the \emph{fiber-size vector}
\((N_{n,1},\ldots,N_{n,n})\)?
\end{problem}

The condition \(n\ge4\) is necessary: \(L_3=6\) and
\[
 (N_{3,1},N_{3,2},N_{3,3})=(2,2,2).
\]
One should not strengthen \eqref{eq:endpoint-conjecture} by requiring the
fiber sizes to decrease monotonically as one moves inward from an endpoint.
Exact enumeration at \(n=12\) gives
\begin{equation}
\begin{split}
(N_{12,1},\ldots,N_{12,12})={}&
(5760,2538,1678,1135,1413,1336,\\
&\hspace{23mm}1336,1413,1135,1678,2538,5760),
\end{split}
\label{eq:n12-vector}
\end{equation}
so, for example, \(N_{12,5}>N_{12,4}\).  This does not contradict
\cref{prob:endpoint-dominance}.

\subsection{Prime transitions}

At a prime level, the new residue component is independent of all earlier
components in the exact CRT sense.  We first derive the resulting transition
formula.

\begin{proposition}[Prime transition]
\label{prop:prime-transition}
Let \(p\) be prime.  With the convention
\(N_{p-1,0}=N_{p-1,p}=0\), one has
\begin{equation}
 N_{p,j}
 =(p-j)N_{p-1,j}+(j-1)N_{p-1,j-1}
 \qquad(1\le j\le p).
\label{eq:prime-transition}
\end{equation}
\end{proposition}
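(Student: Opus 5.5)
Since \(p\) is prime and \(p\ge2\), we have \(L_p=pL_{p-1}\) with \(\gcd(p,L_{p-1})=1\). By the ordinary Chinese remainder theorem, the map
\[
 q\bmod L_p\longmapsto\bigl(q\bmod L_{p-1},\ q\bmod p\bigr)
\]
is a bijection from \(\mathbb Z/L_p\mathbb Z\) onto \(\mathbb Z/L_{p-1}\mathbb Z\times\mathbb Z/p\mathbb Z\). We use this to count the level-\(p\) fibers. Fix a class \(q\bmod L_{p-1}\) with \(F_{p-1}(q)=s\). The value \(F_{p-1}\) depends only on \(q\bmod L_{p-1}\) by \eqref{eq:period-easy}. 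As the residue \(r=q\bmod p\) runs over \(\{0,\ldots,p-1\}\), the lifts are exactly the \(p\) classes modulo \(L_p\) lying over the fixed class. By \cref{lem:recurrence}, \(F_p=s+1\) when \(r<s\) and \(F_p=s\) when \(r\ge s\). Since \(1\le s\le p-1\), exactly \(s\) lifts give \(s+1\) and exactly \(p-s\) lifts give \(s\).

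Now sum over classes modulo \(L_{p-1}\). A lift has \(F_p=j\) in one of two ways. Either its base has \(s=j\) and the residue is non-incrementing, which contributes \((p-j)N_{p-1,j}\). Or its base has \(s=j-1\) and the residue is incrementing, which contributes \((j-1)N_{p-1,j-1}\). This gives \eqref{eq:prime-transition}.

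At the boundary, \(j=1\) takes only the first term. The second term is \(0\cdot N_{p-1,0}\), which vanishes by the convention \(N_{p-1,0}=0\). Likewise \(j=p\) takes only the second term, since the first term is \(0\cdot N_{p-1,p}=0\).

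There is no real obstacle. The only point needing care is the exact independence of the new residue \(q\bmod p\) from \(q\bmod L_{p-1}\), which is where primality enters. For composite \(k\) the residue \(q\bmod k\) is constrained by the lower levels, and the formula fails.
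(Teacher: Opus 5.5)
Your proof is correct and follows the paper's argument essentially verbatim: you split \(\mathbb Z/L_p\mathbb Z\cong\mathbb Z/L_{p-1}\mathbb Z\times\mathbb Z/p\mathbb Z\) by the CRT, count \(s\) incrementing and \(p-s\) non-incrementing residues over each base class with \(F_{p-1}=s\), and sum over the two possible predecessor states. Your explicit check of the boundary cases \(j=1\) and \(j=p\) is a harmless addition; the paper leaves these to the stated convention.
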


\begin{proof}
Since \(p\nmid L_{p-1}\), each residue class modulo \(L_{p-1}\) has exactly
one lift modulo \(pL_{p-1}=L_p\) for each prescribed residue
\(r=q\bmod p\).  If the old state is \(j\), it remains at \(j\) for the
\(p-j\) residues \(j,\ldots,p-1\), and increases to \(j+1\) for the
\(j\) residues \(0,\ldots,j-1\).  Hence a final state \(j\) receives
\(p-j\) copies from the old state \(j\) and \(j-1\) copies from the old state
\(j-1\).
\end{proof}

\begin{corollary}
\label{cor:prime-propagation}
Let \(p\ge5\) be prime.  If endpoint dominance holds at level \(p-1\), then
it holds at level \(p\).
\end{corollary}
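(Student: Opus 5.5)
The plan is to apply the prime transition formula \eqref{eq:prime-transition} directly and compare termwise with the endpoint count. First I would record that the endpoint count at level \(p\) is \(\varphi(L_p)=(p-1)\varphi(L_{p-1})\). This holds because \(p\nmid L_{p-1}\), so \(L_p=pL_{p-1}\) and \(\varphi\) is multiplicative. The same value also comes out of \eqref{eq:prime-transition} at \(j=1\), since \(N_{p,1}=(p-1)N_{p-1,1}\) with \(N_{p-1,1}=\varphi(L_{p-1})\) by \eqref{eq:endpoint-counts}. Write \(E=\varphi(L_{p-1})\). The goal is then \(N_{p,j}<(p-1)E\) for every \(1<j<p\).

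Fix \(1<j<p\). Both coefficients \(p-j\) and \(j-1\) in \eqref{eq:prime-transition} are at least \(1\), and they sum to \(p-1\). Hence it suffices to show \(N_{p-1,j}\le E\) and \(N_{p-1,j-1}\le E\), with strict inequality in at least one of the two. I would split into three cases.

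\begin{itemize}
\item \(3\le j\le p-2\). Both labels \(j\) and \(j-1\) lie strictly inside \(\{1,\ldots,p-1\}\). The hypothesis of endpoint dominance at level \(p-1\) therefore gives strict inequality in both terms.
\item \(j=2\). Here \(N_{p,2}=(p-2)N_{p-1,2}+E\). Because \(p\ge5\), the label \(2\) satisfies \(1<2<p-1\), so it is internal at level \(p-1\) and \(N_{p-1,2}<E\). This gives \(N_{p,2}<(p-1)E\).
\item \(j=p-1\). Here \(N_{p,p-1}=N_{p-1,p-1}+(p-2)N_{p-1,p-2}=E+(p-2)N_{p-1,p-2}\), using \eqref{eq:endpoint-counts} at level \(p-1\). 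Since \(1<p-2<p-1\) for \(p\ge5\), dominance gives \(N_{p-1,p-2}<E\), and the inequality follows. Alternatively, this case follows from \(j=2\) by the symmetry \eqref{eq:count-symmetry}.
\end{itemize}

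The only genuine point is the role of \(p\ge5\). It ensures that \(p-1\ge4\), so that the hypothesis at level \(p-1\) is the meaningful statement of \cref{prob:endpoint-dominance}. It also ensures that labels \(2\) and \(p-2\) are internal there, which is exactly what the boundary cases \(j=2\) and \(j=p-1\) need. Beyond this, the argument is a convex-combination bound, and I expect no real obstacle.
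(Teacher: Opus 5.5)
Your proposal is correct and matches the paper's proof: it applies the prime transition formula \eqref{eq:prime-transition}, whose coefficients $p-j$ and $j-1$ sum to $p-1$, and uses that for $2\le j\le p-1$ at least one of $N_{p-1,j}$, $N_{p-1,j-1}$ is internal at level $p-1$, while the other is at most $\varphi(L_{p-1})$. Your three-case split just spells out the paper's one-line observation, and you correctly pin down $p\ge5$ as the condition that makes labels $2$ and $p-2$ internal at level $p-1$.
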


\begin{proof}
Put \(M=\varphi(L_{p-1})\).  The two endpoint counts at level \(p-1\)
equal \(M\), while all internal counts are strictly smaller.  For
\(2\le j\le p-1\), at least one of
\(N_{p-1,j}\) and \(N_{p-1,j-1}\) is internal.  Hence
\[
 N_{p,j}
 <\bigl((p-j)+(j-1)\bigr)M
 =(p-1)M
 =\varphi(L_p),
\]
where \cref{eq:prime-transition} was used in the first step.
\end{proof}

\subsection{Strict dominance at the nearest internal positions}

We next settle the problem for the positions adjacent to the endpoints.

\begin{theorem}[Nearest internal positions]
\label{thm:nearest-internal}
For every \(n\ge4\),
\begin{equation}
 N_{n,2}=N_{n,n-1}<\varphi(L_n).
\label{eq:nearest-internal}
\end{equation}
\end{theorem}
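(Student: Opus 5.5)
The plan is to find an exact arithmetic description of the fiber \(\{F_n=2\}\) and then compare it with the unit group modulo \(L_n\). The identity \(N_{n,2}=N_{n,n-1}\) is immediate from \eqref{eq:count-symmetry}, so only the inequality needs work.

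\textbf{Step 1: characterise the fiber.} Since the trajectory starts at \(1\) and is nondecreasing (\cref{lem:recurrence}), \(F_n(q)=2\) means that exactly one increment occurs. Let \(k\) be the first level at which it occurs. Before level \(k\) the state is \(1\), so the increment test at level \(j\) is \(q\bmod j=0\). Hence \(k\) is the least \(j\in\{2,\ldots,n\}\) with \(j\mid q\), and this least divisor is automatically a prime \(p\le n\). After level \(p\) the state is \(2\), and there is no further increment exactly when \(q\bmod j\notin\{0,1\}\) for \(p<j\le n\). Therefore
\[
 F_n(q)=2
 \iff
 \exists\,p\le n\text{ prime}:\;
 p\mid q,\quad
 r\nmid q\ (r<p),\quad
 j\nmid q,\ j\nmid q-1\ (p<j\le n).
\]
Here \(r\) ranges over primes. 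These events are disjoint for distinct \(p\), so \(N_{n,2}=\sum_{p\le n}E_p\), where \(E_p\) counts the classes modulo \(L_n\) satisfying the condition for that \(p\).

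\textbf{Step 2: map the fiber into the units.} The main tool I would use is a CRT splice. Given \(q\) counted by \(E_p\), define \(q^\ast\bmod L_n\) through its prime-power CRT components:
\[
 q^\ast\equiv q\pmod{M_r}\ (r<p),\qquad
 q^\ast\equiv q-1\pmod{M_r}\ (r\ge p).
\]
By Step 1, \(q\) is coprime to every prime \(r<p\). Also \(q-1\) is coprime to every prime \(r\ge p\) with \(r\le n\): for \(r=p\) this holds because \(p\mid q\), and for \(r>p\) because \(r\nmid q-1\). Hence \(q^\ast\) is a unit modulo \(L_n\), and \(p\mid q^\ast+1\). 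For fixed \(p\) the map \(q\mapsto q^\ast\) is injective, since \(q\) is recovered componentwise.

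\textbf{Step 3: handle collisions across different \(p\) (the main obstacle).} If the splice were injective on the whole fiber, strictness would follow by exhibiting a unit outside the image. For example, \(q^\ast\equiv-1\): its preimage would have to be \(q\equiv0\) when \(p=2\), but \(F_n(0)=n\ne2\). The difficulty is recovering \(p\) from \(q^\ast\). One would like \(p\) to be the least prime dividing \(q^\ast+1\). However, for \(r<p\) we have \(q^\ast+1\equiv q+1\pmod r\), and Step 1 does not prevent \(r\mid q+1\).

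\begin{itemize}
\item \emph{First attempt.} Modify the splice on the primes \(r<p\), for instance using \(-q\) or another residue-class bijection preserving units, so that \(q^\ast+1\not\equiv0\pmod r\) for \(r<p\). A local count at a single \(r\) shows that this cannot be done component by component: there are \(M_r(1-1/r)\) admissible \(q\)-classes but only \(M_r(1-2/r)\) target classes. So any repair must borrow slack from the conditions at the large moduli \(j\in(p,n]\), which discard a substantial proportion of classes.
\item \emph{Fallback.} Compare densities under Haar measure. Write
\[
 \frac{E_p}{L_n}
 =\prod_{r<p}\Bigl(1-\frac1r\Bigr)\cdot\frac1p\cdot
 \mathbb P\bigl(j\nmid Q,\ j\nmid Q-1\ (p<j\le n)\mid\cdots\bigr),
\]
and show that \(\sum_pE_p/L_n<\prod_{p\le n}(1-1/p)\). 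To do this I would bound the last conditional factor by the product of \((1-2/r)\) over primes \(r\in(p,n]\). The factor \(\prod_{r<p}(1-1/r)\) matches the unit density. Strict inequality should come from the excess \((1-2/r)\) over \((1-1/r)^{2}\)-type factors, once the sum over \(p\) is organised as a telescoping comparison.
\end{itemize}

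Small \(n\) (say \(n\le 12\)) would be checked directly against the exact counts, for example \eqref{eq:n12-vector}.
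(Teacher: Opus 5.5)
Your Steps 1 and 2 are correct, but the proof does not close. The injection in Step 3 is left unresolved, and the fallback density comparison, as you set it up, cannot give a strict inequality.

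Write \(u_r=1-1/r\) and \(v_r=1-2/r\) for primes \(r\le n\). Your bound
\[
 \frac{E_p}{L_n}\le\frac1p\prod_{r<p}u_r\prod_{p<r\le n}v_r
\]
is valid. The conditioning event depends only on \(Q\) modulo the primes \(r\le p\), and the prime-level conditions at \(r\in(p,n]\) are CRT-independent of it. However, \(u_p-v_p=1/p\), so the telescoping identity (with \(p,r\) ranging over primes)
\[
 \prod_{r\le n}u_r-\prod_{r\le n}v_r
 =\sum_{p\le n}\Bigl(\prod_{r<p}u_r\Bigr)\frac1p\Bigl(\prod_{p<r\le n}v_r\Bigr)
\]
holds, and \(v_2=0\) kills the second product on the left. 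The sum of your bounds is therefore exactly \(\prod_{r\le n}(1-1/r)=\varphi(L_n)/L_n\). The prime-only bound has no aggregate slack at all. The heuristic comparing \((1-2/r)\) with \((1-1/r)^2\) does not identify any source of strictness. As written, your fallback yields only \(N_{n,2}\le\varphi(L_n)\).

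The missing idea is that strictness must come from a composite level you discarded. Take \(p=2\) and \(n\ge4\). The level \(j=4\) imposes \(4\nmid q\), so \(q\equiv2\pmod 4\), and
\[
 \frac{E_2}{L_n}\le\frac14\prod_{3\le r\le n}\Bigl(1-\frac2r\Bigr).
\]
This is half of your bound for \(p=2\). It is strictly smaller, because each factor is at least \(1/3\). With this one observation your fallback becomes a complete, non-inductive proof for every \(n\ge4\), and no small-case check is needed.

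The paper argues by induction on \(n\) instead.
\begin{itemize}
\item At a composite level, the transition \(1\to2\) is impossible, since it would need \(n\mid q\) with \(\gcd(q,L_{n-1})=1\). Hence \(N_{n,2}\le\rho_nN_{n-1,2}\), while \(\varphi(L_n)=\rho_n\varphi(L_{n-1})\).
\item At a prime level, the transition formula gives \(N_{p,2}=(p-2)N_{p-1,2}+N_{p-1,1}\).
\item Strictness propagates from the base case \(N_{4,2}=2<4\).
\end{itemize}
The induction is shorter. Your repaired route gives an explicit density bound for the fiber without any induction.
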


\begin{proof}
By reflection, the two counts are equal, so it suffices to consider
\(j=2\).  At \(n=4\), the recurrence gives
\[
 (F_4(0),\ldots,F_4(11))
 =(4,1,2,2,4,1,4,1,3,3,4,1)
\]
and hence \(N_{4,2}=2<4=\varphi(L_4)\).

Suppose first that \(n>4\) is composite, and put
\[
 \rho_n=\frac{L_n}{L_{n-1}}.
\]
Every class modulo \(L_{n-1}\) has \(\rho_n\) lifts modulo \(L_n\).
A final state \(2\) can arise only by remaining at an old state \(2\), or by
increasing from an old state \(1\).  The latter case would require
\(q\bmod n=0\).  This is impossible: the old-state condition
\(F_{n-1}(q)=1\) implies \(\gcd(q,L_{n-1})=1\), whereas \(n\mid q\) and
the compositeness of \(n\) give a prime divisor of both \(q\) and
\(L_{n-1}\).  Consequently,
\[
 N_{n,2}\le \rho_nN_{n-1,2}.
\]
For composite \(n\),
\begin{equation}
 \varphi(L_n)=\rho_n\varphi(L_{n-1}).
\label{eq:phi-lift-composite}
\end{equation}
Indeed, either \(L_n=L_{n-1}\), or \(n=p^a\) with \(a\ge2\), in which case
\(L_n/L_{n-1}=p\) and the totient also grows by the factor \(p\).
Induction now gives the strict inequality.

If \(n=p\) is prime, \cref{eq:prime-transition} at \(j=2\) gives
\[
 N_{p,2}=(p-2)N_{p-1,2}+N_{p-1,1}
 <(p-2)\varphi(L_{p-1})+\varphi(L_{p-1})
 =\varphi(L_p).
\]
This completes the induction.
\end{proof}

\begin{corollary}
\label{cor:minimal-counterexample}
If \cref{prob:endpoint-dominance} is false and \(n\) is the least level at
which it fails, then \(n\) is composite and the offending position satisfies
\[
 3\le j\le n-2.
\]
\end{corollary}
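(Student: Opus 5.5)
The argument combines the minimality of \(n\) with \cref{cor:prime-propagation} and \cref{thm:nearest-internal}; no new arithmetic input is needed. Let \(n\) be the least level \(\ge4\) at which \eqref{eq:endpoint-conjecture} fails. There is then some \(1<j<n\) with \(N_{n,j}\ge\varphi(L_n)\). I first dispose of the position constraint, and then of the primality of \(n\).

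\emph{Position.} By \cref{thm:nearest-internal}, \(N_{n,2}=N_{n,n-1}<\varphi(L_n)\) for every \(n\ge4\). So the offending \(j\) can be neither \(2\) nor \(n-1\). Together with \(1<j<n\), this leaves \(3\le j\le n-2\).

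\emph{Compositeness.} Suppose instead that \(n=p\) is prime. Since \(n\ge4\), we have \(p\ge5\), so \(p-1\ge4\).

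By minimality of \(n\), endpoint dominance holds at level \(p-1\). Then \cref{cor:prime-propagation} gives endpoint dominance at level \(p\), contradicting the failure at \(n=p\). Hence \(n\) is composite.

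The only point needing care is the bottom of the range. Minimality must be read as "least level \(\ge4\) at which dominance fails", so that the hypothesis at \(p-1\) is actually available. This holds because \(p-1\ge4\) whenever \(p\ge5\), and the problem is only posed for \(n\ge4\). In particular, the degenerate level \(3\), where \((N_{3,1},N_{3,2},N_{3,3})=(2,2,2)\), never has to feed into \cref{cor:prime-propagation}.

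It is also worth checking that \(n=4\) does not slip through as a prime base case. It does not: \(4\) is composite, and \cref{thm:nearest-internal} already covers \(j=2,3\) there, which are all the internal positions. So level \(4\) satisfies dominance outright. The statement then follows for every least counterexample.
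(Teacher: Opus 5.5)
Your proof is correct and takes the same approach as the paper. Corollary~\ref{cor:prime-propagation}, applied through minimality at level $p-1\ge4$, excludes prime $n$, and Theorem~\ref{thm:nearest-internal} excludes $j=2$ and $j=n-1$; your explicit check that the hypothesis at $p-1$ is available (so level $3$ never enters) makes precise a point the paper leaves implicit.
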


\begin{proof}
\Cref{cor:prime-propagation} excludes prime \(n\), and
\cref{thm:nearest-internal} excludes the two nearest internal positions.
\end{proof}

\subsection{Transition counts at an arbitrary stage}

For every \(n\ge2\), we retain
\(\rho_n=L_n/L_{n-1}\).  We define \(T_{n,j}\), the
\emph{transition count from state \(j\) to state \(j+1\)}, by
\[
 T_{n,j}
 =\#\{q\bmod L_n:
 F_{n-1}(q)=j,\ q\bmod n<j\},
\qquad
 T_{n,0}=T_{n,n}=0.
\]

\begin{proposition}[Balance identities for transition counts]
\label{prop:transition-counts}
For \(1\le j\le n\),
\begin{equation}
 N_{n,j}
 =\rho_nN_{n-1,j}-T_{n,j}+T_{n,j-1},
\label{eq:transition-balance}
\end{equation}
where \(N_{n-1,n}=0\).  Consequently, for \(1\le k<n\),
\begin{equation}
 \rho_n\sum_{j=1}^{k}N_{n-1,j}
 -\sum_{j=1}^{k}N_{n,j}
 =T_{n,k}\ge0.
\label{eq:natural-prefix}
\end{equation}
Reflection also gives
\begin{equation}
 T_{n,j}+T_{n,n-j}
 =\rho_nN_{n-1,j}
 \qquad(1\le j<n).
\label{eq:transition-reflection}
\end{equation}
\end{proposition}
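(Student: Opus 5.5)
The plan is a lift-counting argument for the balance identity, a telescoping sum for the prefix identity, and an appeal to the reflection formula for the last identity. Throughout, fix a class \(q_0\bmod L_{n-1}\). Its \(\rho_n\) lifts modulo \(L_n\) all have the same level-\((n-1)\) state \(F_{n-1}(q_0)\), since \(L_{n-1}\) is divisible by every \(k\le n-1\). By \eqref{eq:recurrence}, each lift \(q\) either stays at state \(j=F_{n-1}(q)\), or moves to \(j+1\) exactly when \(q\bmod n<j\). Only the lift structure matters here; we never need to know how \(q\bmod n\) is distributed among the lifts, so no independence is used.

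\emph{Balance identity.} Classes modulo \(L_n\) with old state \(j\) number \(\rho_nN_{n-1,j}\). Of these, exactly \(T_{n,j}\) leave state \(j\) (by the definition of \(T_{n,j}\)), and exactly \(T_{n,j-1}\) enter state \(j\) from \(j-1\). Since the state changes by at most one per level, no other class ends at \(j\). This gives \eqref{eq:transition-balance}. The boundary cases need a short check. For \(j=n\), we have \(N_{n-1,n}=0\) and \(T_{n,n}=0\). For \(j=1\), \(T_{n,0}=0\) is consistent because nothing enters state \(1\). Also \(T_{n,n}=0\) is consistent because state \(n\) is unreachable at level \(n-1\).

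\emph{Prefix identity.} Sum \eqref{eq:transition-balance} over \(j=1,\dots,k\). The \(T\) terms telescope to \(-T_{n,k}+T_{n,0}=-T_{n,k}\), which yields \eqref{eq:natural-prefix}. Nonnegativity holds because \(T_{n,k}\) is a cardinality.

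\emph{Reflection identity.} Use the involution \(q\mapsto q^\ast=L_n-1-q\) modulo \(L_n\). By \eqref{eq:reflection-int} at level \(n-1\), together with periodicity, \(F_{n-1}(q^\ast)=n-F_{n-1}(q)\). Also \(q^\ast\bmod n=n-1-(q\bmod n)\), because \(n\mid L_n\). So the involution carries old state \(j\) to old state \(n-j\). The increment condition \(q^\ast\bmod n<n-j\) is equivalent to \(q\bmod n\ge j\), which is the non-increment condition for \(q\); this is the same computation as in \cref{prop:elementary}. Hence \(T_{n,n-j}\) equals the number of classes with old state \(j\) that do \emph{not} increment. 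Adding \(T_{n,j}\), which counts those that do increment, gives all \(\rho_nN_{n-1,j}\) classes with old state \(j\). This is \eqref{eq:transition-reflection}.

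The only delicate step is the reflection computation. It requires confirming that the level-\((n-1)\) reflection is valid for representatives modulo \(L_n\) rather than modulo \(L_{n-1}\). This holds because \(-1-q\) and \(L_n-1-q\) differ by a multiple of \(L_{n-1}\), so \eqref{eq:period-easy} applies. The remaining steps are routine bookkeeping.
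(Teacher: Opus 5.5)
Your proposal is correct and follows the paper's proof essentially step for step. It uses lift counting for the balance identity, telescoping for the prefix identity, and the involution \(q\mapsto -1-q\), which pairs increments from old state \(j\) with non-increments from old state \(n-j\). Your extra check that the level-\((n-1)\) reflection is valid for representatives modulo \(L_n\), and your boundary checks for \(j=1\) and \(j=n\), are details the paper leaves implicit.
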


\begin{proof}
There are \(\rho_nN_{n-1,j}\) lifted classes with old state \(j\).
Exactly \(T_{n,j}\) of them leave \(j\), while \(T_{n,j-1}\) classes enter
from \(j-1\).  This proves \eqref{eq:transition-balance}, and summing the
identity gives \eqref{eq:natural-prefix}.

Under the involution \(q\mapsto-1-q\), old state \(j\) is paired with old
state \(n-j\).  If \(r=q\bmod n\), then the paired residue is \(n-1-r\).
The inequality \(r<j\) is complementary to
\(n-1-r<n-j\).  Thus increments from state \(j\) are paired with
nonincrements from state \(n-j\), proving
\eqref{eq:transition-reflection}.
\end{proof}

\begin{remark}
\Cref{eq:natural-prefix} is a prefix-sum inequality in the original
positional order.  It is not ordinary majorization, which first sorts the
coordinates by size.  Abstract transition-count arrays satisfying
\eqref{eq:transition-balance}--\eqref{eq:transition-reflection} need not
preserve
sorted majorization, so an eventual proof of
\cref{prob:endpoint-dominance} must use more arithmetic structure.
\end{remark}

\subsection{A fiber bound from large prime levels}

We now obtain a uniform fiber bound from the prime-modulus components that
occur at only one recurrence level.

\begin{proposition}
\label{prop:large-prime-suffix}
Let
\[
 \mathcal P_n=\{p\text{ prime}:n/2<p\le n\}.
\]
Fix every prime-power CRT component except the residues
\(x_p=q\bmod p\), \(p\in\mathcal P_n\).  Then, for every survivor label
\(1\le j\le n\),
\begin{equation}
 \#\{(x_p)_{p\in\mathcal P_n}:F_n(q)=j\}
 \le\prod_{p\in\mathcal P_n}(p-1).
\label{eq:large-prime-suffix}
\end{equation}
\end{proposition}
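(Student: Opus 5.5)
We treat the residues \(x_p\), \(p\in\mathcal P_n\), as the only free coordinates and view the map \((x_p)\mapsto F_n(q)\) as a walk through the large primes in increasing order. The goal is to show that for each target \(j\), at most \(p-1\) of the \(p\) choices of \(x_p\) survive at each large-prime stage, one stage at a time.

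First, note that each \(p\in\mathcal P_n\) divides exactly one recurrence level \(k\le n\), namely \(k=p\), since \(2p>n\). Also \(M_p=p\). Hence, with all other prime-power components fixed, the residue \(q\bmod k\) for every level \(k\notin\mathcal P_n\) is already determined; this is the same projection observation used in \cref{lem:blocking-prime-interval}. The residue at level \(p\) is \(x_p\), and by the ordinary CRT the \(x_p\) range independently over \(\mathbb Z/p\mathbb Z\).

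List \(\mathcal P_n=\{p_1<\dots<p_s\}\) and induct on the number of large primes processed, peeling them off from the top. Fix all \(x_{p_i}\) with \(i<s\). Then the state \(F_{p_s-1}(q)=:y\) is determined. At level \(p_s\), the next state is \(y+\ind_{\{x_{p_s}<y\}}\). All later levels \(k\in(p_s,n]\) are non-large-prime levels, so their residues are fixed. The map from the state after level \(p_s\) to \(F_n\) is therefore a fixed deterministic function \(G\) of that single integer.

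The key point is that \(G\) is nondecreasing, and in fact \(G(z+1)\ge G(z)\) in the coupled sense: two trajectories of the recurrence with the same residues that differ in state keep their order and can only merge. Consequently \(F_n=j\) requires the post-\(p_s\) state \(z\in\{y,y+1\}\) to satisfy \(G(z)=j\).

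We then distinguish cases.

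\emph{Case \(G(y)\ne G(y+1)\).} Only one of \(z=y\) and \(z=y+1\) works. The choice \(z=y\) uses \(p_s-y\) residues, and \(z=y+1\) uses \(y\) residues. Since \(1\le y\le p_s-1\), both counts are at most \(p_s-1\).

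\emph{Case \(G(y)=G(y+1)\).} All \(p_s\) residues may work, and this is the main obstacle. To handle it, I would not induct stage by stage. Instead I would count jointly over all large primes, using the bound \(\prod(p-1)\), which we can rewrite as \(\prod p\cdot\prod(1-1/p)\).

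The cleaner route I would try first is an injection. Fix the target \(j\). For each tuple \((x_p)\) reaching \(j\), look at the last large prime \(p\) at which the trajectory "could have split". Alternatively, use an injection from the solution set into \(\prod_{p}\{0,\dots,p-1\}\setminus\{c_p\}\) for suitably chosen forbidden residues \(c_p\).

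Concretely, the plan is to show that the forbidden value can be taken as a value of \(x_{p}\) that is determined by the state. Among the tuples with a given prefix, there is the extreme choice \(x_p=0\), which always increments, and \(x_p=p-1\), which never increments since \(y\le p-1\). A merging argument shows that replacing \(x_p=0\) by \(x_p=p-1\) changes the final value unless the trajectories merge. If they merge, then mapping the tuple with \(x_p=0\) to the one with \(x_p=p-1\) is a collision, so we need a reindexing: send the residue \(0\) to the residue \(y\), the boundary residue. The residue \(y\) does not increment, so it lands in the same final state as \(p-1\) does.

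This defines, for each prefix, a map on \(\mathbb Z/p\mathbb Z\) that sends the solution set into a set of size \(p-1\). Doing this recursively over \(p_s,p_{s-1},\dots\), and summing over prefixes by Fubini, yields \eqref{eq:large-prime-suffix}.

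The delicate point, which I expect to be the heart of the proof, is verifying that this reindexing is injective on the solution set for each fixed prefix. The monotone-merging property of the recurrence should give this, but it has to be checked carefully.
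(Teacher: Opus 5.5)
Your reduction is correct: since \(2p>n\), each \(p\in\mathcal P_n\) divides only the level \(p\), \(M_p=p\), and every other level has a fixed residue. However, the obstacle you isolate does not exist, and the step that actually needs an idea is left open. At a fixed level with residue \(c\), the map \(s\mapsto s+\ind_{\{c<s\}}\) is strictly increasing: if \(s<s'\) and \(c<s\), then \(c<s'\). So two trajectories with the same residues never merge, your \(G\) is injective, and the case \(G(y)=G(y+1)\) is vacuous. (Were it not vacuous, the reindexing \(0\mapsto y\) could not be injective, because \(y\) would itself be an admissible residue.)

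The genuine gap is the aggregation over several large primes. A bound of \(p_s-1\) admissible residues for each fixed prefix, summed over prefixes by Fubini, gives only \((p_s-1)\prod_{i<s}p_i\). Bounds on one-coordinate fibres never imply the product bound by themselves: the six off-diagonal points of \(\{0,1,2\}^2\) meet every row and column in two points, yet \(6>2\cdot2\). Recursing on the target \(j\) cannot repair this, since the prefixes that matter are those reaching particular \emph{intermediate} states.

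The missing idea is a forward induction on the whole count vector. Let \(v_s\) be the number of assignments of the already processed \(x_p\) giving state \(s\) at the current level, and prove \(\max_s v_s\le\prod(p-1)\) over the processed primes. Fixed levels push \(v\) forward injectively, so the maximum does not grow. At a variable level \(p\), \(v\) is supported on \(\{1,\dots,p-1\}\) and the new vector is \(w_y=(p-y)v_y+(y-1)v_{y-1}\), hence \(w_y\le(p-1)\max_s v_s\). The saving of one residue is exactly that \(x_p=y-1\) never produces state \(y\): it leaves \(y-1\) fixed and lifts \(y\) to \(y+1\). If you instead fix the prefix and count completions, each starting state uses all \((p-y)+y=p\) residues, so a naive backward sup-norm recursion loses this saving. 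This forward \(L^\infty\) iteration is the paper's proof.
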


\begin{proof}
We process the variable primes in increasing order.  Before a variable level
\(p\), let \(v_s\) be the number of assignments already processed that
have state \(s\), with \(v_0=v_p=0\).  After summing over all \(p\) choices
of \(x_p\), the new state-count vector is, for \(1\le y\le p\),
\[
 w_y=(p-y)v_y+(y-1)v_{y-1}.
\]
Therefore
\[
 \|w\|_\infty\le(p-1)\|v\|_\infty.
\]
At a fixed intervening level with residue \(c\), the state map
\[
 s\longmapsto s+\ind_{\{c<s\}}
\]
is strictly increasing and hence injective, so its pushforward cannot
increase the \(L^\infty\)-norm of the count vector.  Finally,
\(p>n/2\) has no multiple other than \(p\) among the levels up to \(n\);
thus \(x_p\) influences no later residue.  Starting from one assignment and
iterating the displayed norm bound proves \eqref{eq:large-prime-suffix}.
\end{proof}

\subsection{Exact finite evidence and the remaining frontier}

We conclude the section with exact finite computations reproduced by the
exact-arithmetic programs in \Cref{app:endpoint-computation}.  These
computations are not used as
a proof of \cref{prob:endpoint-dominance} for arbitrary \(n\).

For every composite \(n\ge4\), define
\[
\begin{split}
 x^{(n)}&=(\rho_nN_{n-1,1},\ldots,\rho_nN_{n-1,n-1},0),\\
 y^{(n)}&=(N_{n,1},\ldots,N_{n,n}),\\
 D_{n,k}&=\sum_{i=1}^k\bigl((x^{(n)})^\downarrow\bigr)_i
          -\sum_{i=1}^k\bigl((y^{(n)})^\downarrow\bigr)_i
 \qquad(1\le k\le n),
\end{split}
\]
where the down-arrow denotes decreasing rearrangement.

\begin{proposition}
\label{prop:endpoint-computation}
Endpoint dominance holds for every \(4\le n\le49\).  Moreover, at every
composite level \(4\le n\le49\),
\begin{equation}
 D_{n,k}=
 \begin{cases}
 0,&k=1,2,n,\\
 >0,&3\le k<n.
 \end{cases}
\label{eq:finite-majorization}
\end{equation}
\end{proposition}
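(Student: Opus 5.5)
This proposition is a finite exact computation, so the proof is an exact-arithmetic certificate, organized so that it never has to range over \(L_n\) individual classes (\(L_{49}\) has about twenty digits). The plan is a dynamic program on fiber-count vectors, driven by the same CRT structure used earlier. We process levels \(k=2,\dots,n\) and keep, for each compatible partial residue configuration that can still influence later levels, the number of classes in each state \(s\). Concretely, we keep a count indexed by the state and by the residues \(q\bmod p^{b}\) for those prime powers \(p^b\le k\) whose prime \(p\) still divides some future level \(k'\in(k,n]\). Primes \(p>n/2\) are used at exactly one level. As in the proof of \cref{prop:prime-transition} and \cref{prop:large-prime-suffix}, they are summed out immediately using the transition \(w_y=(p-y)v_y+(y-1)v_{y-1}\). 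Small primes are kept as explicit residues until their last multiple \(\le n\) has been processed, and are then marginalized.

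At each composite level \(k\), the residue \(q\bmod k\) is determined by the retained prime-power residues. Lifting to a new prime power \(p^a=k\) splits each class into \(p\) lifts. The state update \(s\mapsto s+\ind_{\{q\bmod k<s\}}\) is then applied deterministically. All counts are exact integers (big integers if necessary). At the end of level \(n\), the full state-count vector is \((N_{n,1},\dots,N_{n,n})\).

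The certificate then performs the following checks.
\begin{enumerate}
\item It confirms the internal consistency identities: \(\sum_j N_{n,j}=L_n\), the symmetry \eqref{eq:count-symmetry}, and \(N_{n,1}=\varphi(L_n)\) from \eqref{eq:endpoint-counts}.
\item It verifies \eqref{eq:endpoint-conjecture} directly for each \(4\le n\le 49\). By \cref{cor:prime-propagation} and \cref{thm:nearest-internal}, only composite \(n\) and positions \(3\le j\le n-2\) strictly need checking, though the program checks everything.
\item At each composite level, it forms \(x^{(n)}\) from the stored level-\((n-1)\) vector multiplied by \(\rho_n\), sorts both vectors, and evaluates all prefix differences \(D_{n,k}\) to check the sign pattern \eqref{eq:finite-majorization}.
\end{enumerate}
Two consistency observations tie the computation to the rest of the paper. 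First, \(D_{n,n}=0\) is automatic from the total counts. Second, \(D_{n,1}=D_{n,2}=0\) should agree with the fact that both vectors have the two endpoint counts as their top entries. The relation \(\rho_n\varphi(L_{n-1})=\varphi(L_n)\) from \eqref{eq:phi-lift-composite} makes this consistent, given endpoint dominance at both levels.

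The main obstacle is the size of the state space. The retained configuration space is the product of the prime powers \(p^b\) for primes \(p\le n/2\), multiplied by the \(n\) states. For \(n=49\) this product is roughly \(L_{24}\), about \(5\cdot10^{9}\) configurations, which is too many to enumerate naively. The first remedy is to marginalize each prime as soon as it stops mattering. For example, primes in \((n/3,n/2]\) only matter at the two levels \(p\) and \(2p\), so their residues can be summed out right after level \(2p\). One can also process the primes so that only the largest currently active set is stored. If that still proves too large, the fallback is to fix the residues modulo the large retained primes in an outer loop, which is embarrassingly parallel, and sum the resulting fiber vectors. Correctness of the bookkeeping is cross-checked by brute force over \(0\le q<L_n\) for small \(n\le 16\), and by reproducing the vector \eqref{eq:n12-vector}.
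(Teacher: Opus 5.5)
Your proposal is correct and follows the paper's route. In both, the proposition is a finite exact computation: the fiber-size vectors for $n\le 49$ are computed, validated against the total $L_n$, reflection symmetry and the endpoint counts $\varphi(L_n)$, and then endpoint dominance and the sorted prefix differences $D_{n,k}$ are checked directly, as certified by the programs in Appendix~\ref{app:endpoint-computation}. The paper describes that computation only through its checks, so your CRT-marginalizing dynamic program (primes above $n/2$ summed out at once, an outer parallel loop over the large retained primes) is a sound concrete realization that the paper does not spell out.
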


The computation suggests the following stronger composite-level
majorization:
\[
 D_{n,k}\ge0\quad(1\le k\le n),
 \qquad D_{n,3}>0
 \quad(n\ge4\text{ composite}).
\]
In fact, only the \(k=3\) majorization inequality \(D_{n,3}\ge0\) is needed
to prove endpoint dominance.
Assume inductively that level \(n-1\) has two uniquely maximal endpoint
counts.  Then \(x^{(n)}\) has two coordinates equal to
\(\varphi(L_n)\), while its third-largest coordinate is strictly smaller.
If an internal coordinate of \(y^{(n)}\) were at least
\(\varphi(L_n)\), its three largest coordinates would have sum at least
\(3\varphi(L_n)\), giving \(D_{n,3}<0\), a contradiction.  Together with
the base level \(n=4\) and \cref{cor:prime-propagation}, a proof of
\(D_{n,3}\ge0\) at every composite level would settle
\cref{prob:endpoint-dominance}.

In the verified range, the smallest positive value of \(D_{n,k}/L_n\) is
\[
 \min_{\substack{4\le n\le49,\ n\ {\rm composite}\\3\le k<n}}
 \frac{D_{n,k}}{L_n}
 =\frac{D_{39,3}}{L_{39}}
 =
 \frac{2043072801000}{5342931457063200}
 \approx3.823879863\times10^{-4}.
\]

Combining \cref{cor:minimal-counterexample} and
\cref{prop:endpoint-computation}, the least counterexample, if one exists,
must satisfy
\begin{equation}
 n\ge50,\qquad n\ \text{composite},\qquad 3\le j\le n-2.
\label{eq:counterexample-frontier}
\end{equation}
Possible approaches include proving \(D_{n,3}\ge0\) at every composite
level, establishing a conditional CDF comparison at prime-power levels, or
completing a Hall-type matching argument for compatible residue vectors.
The exact data also show that internal fiber sizes are not positionally
monotone, as \eqref{eq:n12-vector} shows.  This failure does not contradict
endpoint dominance itself.

\section{Concluding remarks}
\label{sec:conclusion}

The main open problem is to determine whether endpoint dominance holds at every level
\(n\ge4\); see \cref{prob:endpoint-dominance}.  Equivalently, it remains open
whether the two endpoint fibers are the unique global maxima of the fiber-size
vector for every such \(n\).

For the limiting measure \(\mu\), it remains open to identify \(\mu\)
explicitly, to determine whether it is non-atomic and whether it has a density,
and to establish upper bounds for its boundary mass that are of the same order
as the logarithmic lower bounds.

\appendix
\section{Computational certificates}
\label{app:certificates}

We record the exact scope and reproduction commands for the two finite
computations cited in the paper.  Both programs use integer arithmetic for
every mathematical assertion.  The source files should accompany any
permanent version of the paper; the submission package is described in
\Cref{app:reproducibility-archive}.

\subsection{The finite surjectivity certificate}
\label{app:range-certificate}

The source
\begin{center}
\path{tools/full_range_certificate.cpp}
\end{center}
has SHA-256 digest
\begin{center}
\small\ttfamily
a4a18f7325395cabd6da056291dd2e6654e06ef3fc65bddf1b0edba4c7269e8a.
\end{center}
It can be compiled and run by
\begin{verbatim}
clang++ -O3 -std=c++17 tools/full_range_certificate.cpp \
  -o /private/tmp/full_range_certificate
/private/tmp/full_range_certificate
\end{verbatim}
The expected output is
\begin{verbatim}
bridge_failures=117 last_bridge_failure=228 failures_at_or_after_229=0
small_unresolved=0 max_completion_q=3764 at_n=193
\end{verbatim}
and the expected exit status is \(0\).

The program performs two logically separate tasks.
\begin{enumerate}
\renewcommand{\labelenumi}{\textup{(\roman{enumi})}}
\item A sieve of Eratosthenes, prime-count prefix table, next-prime table,
and integer range-maximum structure verify the inclusive interval-covering
inequality
\eqref{eq:covering-inequality} for every
\(229\le n\le2\,380\,428\).
\item Direct evaluation of the strict recurrence
\eqref{eq:recurrence} for \(0\le q\le3764\) verifies that every position
occurs for every \(1\le n\le228\).
\end{enumerate}
The executable returns a nonzero status if either assertion fails or if its
archived diagnostic values change.

At the boundary between the two finite ranges, we have
\[
\begin{array}{c|cccc|c}
n&R&H&A&C&\text{status at }x=114\\ \hline
228&157&12&44&144&\nextprime(114)-114=13>H\\
229&157&13&43&143&\nextprime(114)=127=114+H.
\end{array}
\]
Thus the interval-covering condition genuinely fails at \(228\) and succeeds
at \(229\)
with the right endpoint included.  The analytic proof begins at
\(2\,380\,429\), so the direct, finite-covering, and analytic ranges have
no gap.

The program does not prove the CRT interval lemma with blocking primes, the
implication
from the interval-covering condition to surjectivity, or Dusart's estimates.
The remaining implications are proved in \Cref{sec:range}, where Dusart's
estimates are cited; the executable certifies only the two finite statements
isolated in \cref{prop:finite-range}.

\subsection{Endpoint-dominance computation}
\label{app:endpoint-computation}

The exact fiber-size computation uses
\begin{center}
\begin{tabular}{ll}
\toprule
source & SHA-256\\
\midrule
\path{tools/exact_counts_49.cpp}
& \ttfamily 46c561c3c5701a33b957f0b12511a03a\\[-1mm]
& \ttfamily d9991be2841e97b48e39984357200828\\
\path{tools/check_majorization.py}
& \ttfamily 856bcdaf450ce576d56733258f63bbeb3\\[-1mm]
& \ttfamily aaa086e986d797f8f7991511fb65c90\\
\path{tools/tests/test_check_majorization.py}
& \ttfamily 82b71320ce8549ab7d5b09581677d9abe\\[-1mm]
& \ttfamily 91cf43536d98513fe2591951ebec9b5\\
\bottomrule
\end{tabular}
\end{center}
In either \texttt{zsh} or \texttt{bash}, reproduction is:
\begin{verbatim}
clang++ -O3 -std=c++20 -pthread tools/exact_counts_49.cpp \
  -o /private/tmp/exact_counts_49
set -o pipefail
/private/tmp/exact_counts_49 8 |
  python3 tools/check_majorization.py
\end{verbatim}
The \texttt{pipefail} setting ensures that a nonzero exit from either
component makes the pipeline fail.  A valid complete run must display both
\texttt{all\_ok=1} and \texttt{PASS through n=49}.
The terminal summary is
\begin{verbatim}
threads=8 ... all_ok=1
PASS through n=49; ...
closest relative composite stripped partial sum:
n=39 k=1 rho=1
slack=2043072801000/5342931457063200
\end{verbatim}
Before testing majorization, the Python checker requires exactly the
consecutive rows \(n=1,\ldots,49\), rejects every unparseable line, and
verifies the vector length, \(L_n\), total mass, \(\varphi(L_n)\), reflection
symmetry, endpoint counts, the reported internal maximum, and the final row.
The regression suite includes the previously exploitable input consisting of
genuine rows through \(n=4\) followed by a forged \(n=49\) row.  Thus the
checker is a strict validator of the generator's complete output and of the
majorization inequalities; it is not a separate recomputation of the
fiber-size vectors.

Here ``stripped \(k=1\)'' is the full vector's \(k=3\) inequality after
the two endpoint coordinates are removed.  The program computes exact
fiber-size vectors, verifies recurrence totals, reflection, endpoint counts,
endpoint dominance through \(n=49\), and all sorted partial sums asserted in
\eqref{eq:finite-majorization}.  This is finite evidence for
\cref{prob:endpoint-dominance}, not a proof beyond \(n=49\).

\subsection{Reproducibility archive}
\label{app:reproducibility-archive}

The verification code and reproduction material are available in the
Supplementary Material~S1 archive at the following unlisted, view-only
OneDrive link:
\begin{center}
\url{https://hkustconnect-my.sharepoint.com/:u:/g/personal/lchendh_connect_ust_hk/IQDhkh8dBHmDTI3p_VWvrq3tAdRPNZI18OJsaZ5jG-eUwOM}
\end{center}
The linked archive, \path{Supplementary_Material_S1.zip}, contains the three
exact source files above, the checker regression tests,
the reproduction script, compiler and operating-system information, the
complete optimized and ASan/UBSan outputs, and a manifest of SHA-256 digests.
\begin{samepage}
The archive and its internal manifest have the following SHA-256 digests:
\begin{center}
\begin{tabular}{ll}
\toprule
artifact & SHA-256\\
\midrule
archive
& \ttfamily c1779cb4c3edfc8ee38079b2a312ebdd\\[-1mm]
& \ttfamily 8ee99cafeee9644d6dc2c61bd4862d23\\
manifest
& \ttfamily 750390bc01622f782c7a52686bcd098a\\[-1mm]
& \ttfamily 37bc122621d040b22013f3cc8d55bb4b\\
\bottomrule
\end{tabular}
\end{center}
\end{samepage}
The URL is intended for editorial and referee access.  These hashes identify
the exact linked artifact, but the link does not provide permanent scholarly
access.  Before publication, the unchanged archive must be deposited as
journal supplementary material or in a DOI-bearing repository, and the
persistent identifier should be added to the published version.

\section*{Data and code availability}
No empirical data were generated or analyzed in this study.  The source code,
reproduction scripts, exact program outputs, and checksums supporting the
finite computer-assisted arguments are provided in Supplementary Material~S1.

\section*{Funding}
This research did not receive any specific grant from funding agencies in the
public, commercial, or not-for-profit sectors.

\section*{Declaration of competing interest}
The author declares no known competing financial interests or personal
relationships that could have appeared to influence the work reported in this
paper.

\section*{Declaration of generative AI and AI-assisted technologies in the
manuscript preparation process}
During the preparation of this work, the author used Kimi K3 solely to assist
with code writing, computational verification, manuscript drafting, and
language editing.  The author reviewed and edited the content as needed and
takes full responsibility for the content of the article.

\setlength{\bibsep}{2pt plus 1pt minus 1pt}
\bibliographystyle{plainnat}
\bibliography{references}

\end{document}